\numberwithin{equation}{section}
\newtheoremstyle{mytheorem}
{}% measure of space to leave above the theorem. E.g.: 3pt
{}% measure of space to leave below the theorem. E.g.: 3pt
{\it}% name of font to use in the body of the theorem
{\parindent}% measure of space to indent
{\bf}% name of head font
{.}% punctuation between head and body
{ }% space after theorem head; " " = normal interword space
{\thmnumber{#2.~}\thmname{#1}\thmnote{~\rm#3}}% Manually specify head
\newtheoremstyle{myremark}
{}% measure of space to leave above the theorem. E.g.: 3pt
{}% measure of space to leave below the theorem. E.g.: 3ptontinuo dentro e fuori dall'infermeria, poi Barzagli e i due mesi di stop per l'infortunio alla spalla, infine a
{\rm}% name of font to use in the body of the theorem
{\parindent}% measure of space to indent
{\bf}% name of head font
{.}% punctuation between head and body
{ }% space after theorem head; " " = normal interword space
{\thmnumber{#2.~}\thmname{#1}\thmnote{~\rm#3}}% Manually specify head
\newtheoremstyle{myparagraph}
{}% measure of space to leave above the theorem. E.g.: 3pt
{}% measure of space to leave below the theorem. E.g.: 3pt
{\rm}% name of font to use in the body of the theorem
{\parindent}% measure of space to indent
{\bf}% name of head font
{.}% punctuation between head and body
{ }% space after theorem head; " " = normal interword space
{\thmnumber{#2.~}\thmname{#1}\thmnote{#3}}% Manually specify head
\theoremstyle{mytheorem}
\newtheorem{theorem}[subsection]{Theorem}
\newtheorem{definition}[subsubsection]{Definition}
\newtheorem{lemma}[subsection]{Lemma}
\theoremstyle{myremark}
\newtheorem{remark}[subsection]{Remark}
\theoremstyle{myparagraph}
\newtheorem*{parag*}{}
\def\@secnumfont{\sc}
\def\section{\@startsection{section}{1}%
\z@{1.5\linespacing\@plus .2\linespacing}{.7\linespacing}%
{\normalfont\sc\centering}}
\def\ps@headings{\ps@empty
 \def\@evenhead{%
  \setTrue{runhead}%
  \normalfont\footnotesize
  \rlap{\thepage}\hfil
  \def\thanks{\protect\thanks@warning}%
  \leftmark{}{}\hfil}%
 \def\@oddhead{%
  \setTrue{runhead}%
  \normalfont\footnotesize\hfil
  \def\thanks{\protect\thanks@warning}%
  \rightmark{}{}\hfil \llap{\thepage}}%
\let\@mkboth\markboth}
\renewenvironment{proof}[1][\proofname]{\par
  \pushQED{\qed}%
  \normalfont \topsep6\p@\@plus6\p@\relax
  \trivlist
  \itemindent\normalparindent
  \item[\hskip\labelsep
    \bfseries
    #1\@addpunct{.}]\ignorespaces
}{%
  \popQED\endtrivlist\@endpefalse
}
\providecommand{\proofname}{Proof}
\newcommand{\cone}{\times\!\!\!\!\!\times\,} %cone
\newcommand{\Flat}{\mathbb{F}}
\newcommand{\Mass}{\mathbb{M}}
\newcommand{\TP}{\textbf{TP}}
\newcommand{\OTP}{\textbf{OTP}}
\newcommand{\Prob}{\mathscr{P}}
\newcommand{\wto}{\rightharpoonup}
\newcommand{\wstarto}{\stackrel{*}{\rightharpoonup}}
\newcommand{\R}{\mathbb{R}}
\newcommand{\Sk}{\mathcal{S}}
\newcommand{\MM}{\mathbb{M}^\alpha}
\newcommand{\N}{\mathbb{N}}
\newcommand{\Q}{\mathbb{Q}}
\newcommand{\D}{\mathscr{D}}
\newcommand{\Haus}{\mathscr{H}}
\newcommand{\M}{\mathscr{M}}
\newcommand{\Mpiu}{\mathscr{M}_+}
\newcommand{\3}{1}
\newcommand{\length}{{\rm length}}
\newcommand{\Lip}{\mathrm{Lip}}
\newcommand{\dist}{\mathrm{dist}}
\newcommand{\supp}{\mathrm{supp}}
\newcommand{\wrt}{w.r.t.\ }
\newcommand{\e}{\varepsilon}
\newcommand{\eee}{\varepsilon}
\newcommand{\dV}{d_V\kern-1pt}
\newcommand{\trait}[3]{\vrule width #1ex height #2ex depth #3ex}
\newcommand{\trace}{\mathchoice%
  {\mathbin{\trait{.12}{1.2}{.03}\trait{.8}{0.09}{0.03}}}
  {\mathbin{\trait{.12}{1.2}{.03}\trait{.8}{0.09}{0.03}}}
  {\mathbin{\hskip.15ex\trait{.09}{.84}{0.02}\trait{.56}{.07}{.02}}\hskip.15ex}
  {\mathbin{\trait{.07}{.6}{.01}\trait{.4}{.06}{.01}}}}
\newenvironment{itemizeb}
{\begin{itemize}\itemsep=2pt}{\end{itemize}}
\begin{document}

	% The following instructions defines the headings
	% for all pages except the first one
	% They can be safely removed
	%
\pagestyle{empty}
\pagestyle{myheadings}
\markboth%
{\underline{\centerline{\hfill\footnotesize%
\textsc{Maria Colombo, Antonio De Rosa, and Andrea Marchese}%
\vphantom{,}\hfill}}}%
{\underline{\centerline{\hfill\footnotesize%
\textsc{On the well-posedness of branched transportation}%
\vphantom{,}\hfill}}}

	% We do not use the maketitle command, and
	% in the next lines we define the headline for the
	% first page, then title, authors' names, and so on...
	% acknowledgements are at the end of the introduction
	% affiliations are at the end of the paper
	%
\thispagestyle{empty}

~\vskip -1.1 cm

	% heading of first page
	%
%{\footnotesize\noindent
%[version:~\dataversione]%
%\hfill
%to apper on\dots
%\hfill DOI~\href{http://dx.doi.org/***}{***} \par
%}

\vspace{1.7 cm}

	% title
	%
{\large\bf\centering
On the well-posedness of branched transportation\\
}

\vspace{.6 cm}

	% authors' names
	%
\centerline{\sc Maria Colombo, Antonio De Rosa, and Andrea Marchese}

\vspace{.8 cm}

{\rightskip 1 cm
\leftskip 1 cm
\parindent 0 pt
\footnotesize

	% abstract, keywords and MSC numbers
	%
{\sc Abstract.}
We show in full generality the stability of optimal traffic paths in branched transport: namely we prove that any limit of optimal traffic paths is optimal as well.
This solves an open problem in the field (cf. Open problem 1 in the book \emph{Optimal transportation networks}, by Bernot, Caselles and Morel), which has been addressed up to now only under restrictive assumptions.%, about the stability of optimal traffic paths in branched transport. 
%We answer Open problem 1 in the book \emph{Optimal transportation networks}, by Bernot, Caselles and Morel, about the stability of optimal traffic paths in branched transport. 
%In particular, we prove that in the Euclidean space $\mathbb{R}^d$, stability holds for every prescribed $\alpha \in (0,1)$ with respect to variations of the given measures $(\mu^-,\mu^+)$, which was known up to now only for $\alpha>1-\frac1d$. 
% We prove it for exponents $\alpha>1-\frac1{d-1}$ (in particular, for every $\alpha \in (0,1)$ when $d=2$), for a fairly large class of measures $\mu^+$ and $\mu^-$.

\par
\medskip\noindent
{\sc Keywords: } Transportation network, Branched transport, % Irrigation problem,
 Traffic path, Stability.

\par
\medskip\noindent
{\sc MSC :} 49Q20, 49Q10.
\par
}

%\tableofcontents

\section{Introduction}

This paper deals with optimizers of the branched transportation problem. Given a source $\mu^-$ and a target $\mu^+$, positive measures on $\R^d$ with compact support, a \emph{traffic path} transporting $\mu^-$ onto $\mu^+$ is given by a $1$-rectifiable current $T$ whose boundary $\partial T$ is $\mu^+-\mu^-$. This can be identified with a vector-valued measure $T=\vec T(\theta\Haus^1\trace E)$ (with unit vector field $\vec T$ and non-negative multiplicity $\theta$), supported on a bounded set $E\subset \R^d$, which is contained in a countable union of curves of class $C^1$ and having distributional divergence ${\mbox{div}}~T=\mu^--\mu^+.$
Given a parameter $\alpha \in (0,1)$, quantifying the convenience of grouping particles during the transportation, we consider the \emph{$\alpha$-mass} of $T$
\begin{equation}
\label{eqn:alphamass}
\Mass^\alpha(T):=\int_E \theta(x)^{\alpha}d\Haus^1(x),
\end{equation}
and the minimal transport energy to connect $\mu^-$ to $\mu^+$
\begin{equation}\label{mainp}
W^\alpha(\mu^-,\mu^+):=\inf\{\Mass^\alpha(T): \mbox{$T$ is a traffic path transporting $\mu^-$ onto $\mu^+$}\}.
\end{equation}
The optimizers in the minimization problem are called optimal traffic paths; the set of optimizers is denoted by $\OTP(\mu^-,\mu^+)$.
%$$\Mass^{\alpha}(T)=W^\alpha(\mu^-,\mu^+).$$
The existence of solutions is obtained by direct methods and in general one does not expect uniqueness. Arguably the main open question concerning the well-posedness of the problem, of special relevance in view of numerical simulations, %and to study regularity properties of the optimizers{\color{red} (ometterei)},
 is whether or not the optima are \emph{stable} with respect to variations of the initial and final distribution of mass. In other words, we ask if the limit of suitable sequences of optima (with respect to the usual notion of convergence of vector-valued measures denoted by $T_n\overset{*}{\rightharpoonup} T$) is still an optimum.
% In order to introduce this question more precisely and to state our main result, let us give some informal definitions. More technical definitions will be introduced in Section \ref{s:notation} and used along the paper. Nevertheless, the simplified notation introduced here suffices to formulate the question and our main result.\\ 
%
%Given two finite positive measures $\mu^-,\mu^+$ on the set $X:=\overline{B(0,R)}\subset\R^d$ with  $\mu^-(X)=\mu^+(X)$, a traffic path transporting $\mu^-$ onto $\mu^+$ is a vector-valued measure $T=\vec T\otimes(\Haus^1\trace E)$, supported on a set $E\subset X$, which is contained in a countable union of curves of class $C^1$, having distributional divergence 
%$${\mbox{div}}~T=\mu^--\mu^+.$$
%%In this case we denote $\partial_+T:=\mu^+$, $\partial_-T:=\mu^-$. 
%The $\alpha$-mass of $T$ is the quantity
%$$\Mass^\alpha(T):=\int_E |\vec T(x)|^{\alpha}d\Haus^1(x),$$
%and the minimal transport energy associated to $(\mu^+, \mu^-)$ is the quantity
%$$W^\alpha(\mu^-,\mu^+):=\inf\{\Mass^\alpha(T): \mbox{$T$ is a traffic path connecting $\mu^-$ to $\mu^+$}\}.$$
%We say that $T$ is an optimal traffic path, and we write $T\in\OTP(\mu^-,\mu^+)$ if 
%$$\Mass^{\alpha}(T)=W^\alpha(\mu^-,\mu^+).$$
%In the class of traffic paths, we consider the usual notion of weak-$*$ convergence of vector-valued measures, denoted by $T_n\overset{*}{\rightharpoonup} T$.
% 

The main result of our paper provides a positive answer to this question, raised in \cite[Problem 15.1]{BCM}, for every $\alpha \in (0,1)$. 
%{\color{red} SERVE VERAMENTE MUTUALLY SINGULAR? CONTROLLARE IDEA DI ANTONIO}
\begin{theorem}[(Stability of optimal traffic paths)]\label{thm:main}
		Let $\alpha \in(0,1)$,  $\mu^-,\mu^+% \Mpiu(X) {\color{red}($\Mpiu$ non definito)}
		$ be mutually singular positive measures on $\overline{B(0,R)}$, $R>0$, satisfying $\mu^-(\R^d)=\mu^+(\R^d)$. % {\color{red}($\Mass$ non definito)}.
		Let $\{\mu^-_n\}_{n\in \N}, \{\mu^+_n\}_{n\in \N}$ be positive measures on $\overline{B(0,R)}$ such that $\mu^-_n(\R^d)=\mu^+_n(\R^d)$ for every $n \in \N$ and 
		\begin{equation}\label{hp:supp-n-convergence}
		\mu^\pm _n \overset{*}{\rightharpoonup} \mu^\pm,		\end{equation}
		and assume there exist $T_n\in \OTP(\mu^-_n,\mu^+_n)$ optimal traffic paths satisfying 
		\begin{equation}
		\label{hp:energy-bound}
		\sup_{n\in \N}\MM(T_n)<\infty.
		\end{equation}
		Then, the (non-empty) family of subsequential weak-$*$ limits of $T_n$ is contained in $\OTP(\mu^-,\mu^+)$. 
	\end{theorem}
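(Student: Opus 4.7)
The plan is to follow the classical $\Gamma$-convergence strategy: compactness plus lower semicontinuity plus construction of recovery competitors. Compactness and the boundary condition are routine: by continuity of $\partial$ under weak-$*$ convergence, any subsequential limit $T$ of $T_n$ satisfies $\partial T = \mu^+ - \mu^-$, and the non-emptiness of the family of subsequential limits, together with the $1$-rectifiability of $T$, follows from classical compactness results for normal currents of finite $\alpha$-mass combined with the uniform bound \eqref{hp:energy-bound}. Lower semicontinuity of $\MM$ under weak-$*$ convergence on the class of traffic paths is well known. These two ingredients reduce Theorem~\ref{thm:main} to the upper semicontinuity inequality $\limsup_n W^\alpha(\mu_n^-,\mu_n^+)\le W^\alpha(\mu^-,\mu^+)$: once this is established, $\MM(T)\le\liminf_n\MM(T_n)=\liminf_n W^\alpha(\mu_n^-,\mu_n^+)\le W^\alpha(\mu^-,\mu^+)\le\MM(T)$, which forces equality and hence $T\in\OTP(\mu^-,\mu^+)$.

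To prove the upper semicontinuity, given any traffic path $S$ from $\mu^-$ to $\mu^+$ with $\MM(S)<\infty$, I would construct competitors $S_n$ from $\mu_n^-$ to $\mu_n^+$ with $\limsup_n\MM(S_n)\le\MM(S)$. The natural ansatz is $S_n:=S+U_n^-+U_n^+$, where $U_n^\pm$ are traffic paths with boundaries $\mu^\pm-\mu_n^\pm$. A direct computation shows $\partial S_n=\mu_n^+-\mu_n^-$, and the subadditivity of $\MM$ (which follows from the inequality $(a+b)^\alpha\le a^\alpha+b^\alpha$ for $a,b\ge0$ and $\alpha\in(0,1)$) gives $\MM(S_n)\le\MM(S)+\MM(U_n^-)+\MM(U_n^+)$. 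Hence everything reduces to producing connectors $U_n^\pm$ with $\MM(U_n^\pm)\to 0$.

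The main obstacle is that this reduction is \emph{not} solvable by purely abstract means: the quantity $W^\alpha(\mu_n,\mu)$ need not tend to zero when $\mu_n\wstarto\mu$, and can even be $+\infty$ for every $n$ (e.g.\ for small $\alpha$ and $\mu$ absolutely continuous with respect to Lebesgue measure), so the sheer weak-$*$ convergence of the boundaries is insufficient. The crucial extra information is that $\mu_n^\pm$ are boundaries of optimal traffic paths $T_n$ of uniformly bounded $\alpha$-mass, and therefore carry a one-dimensional ``branched'' structure that can be transferred to $\mu^\pm$ in the limit. My strategy is to exploit this by carving out suitable thin pieces of $T_n$ near their endpoints and using them as templates for $U_n^\pm$: after extracting a boundary layer of $T_n$ of arbitrarily small mass (parametrized by a scale $\eta_n\to 0$) and rerouting its free endpoints onto $\mu^\pm$ via a dyadic Wasserstein-type construction that splits mass across scales, one obtains connectors whose $\alpha$-mass is controlled by a quantity that vanishes as $n\to\infty$. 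Making this surgery precise, and controlling it uniformly at both endpoints and at all mass scales simultaneously while preserving the traffic-path structure, is the heart of the proof and the main technical hurdle.
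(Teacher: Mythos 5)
Your high-level reduction is correct and essentially matches the paper's contradiction argument: replace a near-optimal competitor for $T_n$ by a sum of something with small cost and a near-optimal path for $\mu^\pm$, and compare energies using lower semicontinuity. However, the heart of your plan -- constructing connectors $U_n^\pm$ of small $\alpha$-mass by extracting a boundary layer of $T_n$ and ``rerouting its free endpoints onto $\mu^\pm$ via a dyadic Wasserstein-type construction'' -- contains two genuine gaps that prevent it from working.

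The first gap concerns the target of the connection. For $\alpha\le 1-1/d$, a diffuse measure $\mu^\pm$ (say Lebesgue on a set of positive measure) is unreachable at finite $\alpha$-cost from \emph{any} atomic measure, so no dyadic splitting across scales can make the connector finite, let alone small: the endpoint itself is obstructed, not just the route. The only branched structure capable of reaching $\mu^\pm$ is a traffic path for $\mu^\pm$, and the paper's resolution is precisely to \emph{not} connect to $\mu^\pm$. Instead, one extracts boundary layers of $T_n$ \emph{and} of the weak-$*$ limit $T$ near the cubes where $\mu^\pm$ sit, and connects the resulting slices to each other: slices of $1$-rectifiable currents are atomic, the coarea formula (Lemma~\ref{slice}) gives uniform $\MM$-bounds for both families of slices, and they are weak-$*$ close because $T_n\wto T$. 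Lemma~\ref{high_multiplicity} -- a metrization property of $\MM$ restricted to atomic measures of uniformly bounded $\alpha$-mass, valid for all $\alpha\in(0,1)$ -- then produces a cheap connection $T_{n,\mathrm{conn}}$ between the slices. The competitor is $T_{n,\mathrm{conn}}+(T_{\mathrm{opt}}-T_{\3})+T_{n,\3}$, so $T_{\mathrm{opt}}$, not any abstract rerouting, is what serves $\mu^\pm$.

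The second gap is that the boundary layer $T_{n,\3}$ of $T_n$ cannot be arranged to have small $\MM$. Lower semicontinuity only runs the wrong way: $\MM(T_\3)$ (the boundary layer of the limit $T$) is small by dominated convergence, but at the same scale $\MM(T_{n,\3})$ may stay bounded away from zero, e.g.\ when $\alpha$-mass concentrates near $\mu_n^\pm$. The paper never makes $\MM(T_{n,\3})$ small. Instead, it proves (Step 4) that away from the finitely many atoms of $\mu^\pm$ the density of $T_{n,\3}$ is $\le 3\e$, because each small cube exchanges less than $3\e$ of mass with $\mu_n^\pm$; it then compares densities (Step 11) to get $\MM(T_{n,\3})+\MM\big(T_n\trace(G_k\cap\{|\theta_n|>\sqrt\e\})\big)\le(1+\e^{\alpha/2})\MM(T_n)$, and uses the improved lower semicontinuity with a density floor (Lemma~\ref{second}) to show the second term is at least $\MM(T)-\Delta/4$. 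This density-comparison mechanism is what closes the contradiction, and it has no counterpart in your sketch. Finally, there are further technical points you would need: the Smirnov decompositions of $T_n$ need not converge to a good decomposition of $T$ (cancellations), handled via the group-coefficient lower semicontinuity of Lemma~\ref{lemmasem}; and the atoms of $\mu^\pm$ spoil the $3\e$ density bound, which forces the separate treatment of finitely many ``bad cubes'' in Step 3.
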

	
	\begin{remark}[($H$-masses)]
With minor changes, %we are able to prove
 Theorem \ref{thm:main} holds true for every \emph{$H$-mass}. Namely we can replace the integrand $x \mapsto x^\alpha$ in \eqref{eqn:alphamass} with a general function $H:\R\to[0,\infty)$ which is even, sub-additive, lower semi-continuous, monotone non-decreasing in $(0,+\infty)$, continuous in $0$ and satisfies $H(0)=0$. These functionals have been widely studied (see e.g. \cite{White1999,depauwhardt,flat-relax,BW,CFM,MW}).
	The interest is twofold: firstly a general formulation of the branched transportation problem allows to consider several interesting models, which are relevant for applied mathematics and numerical approximations as in \cite{BW}, secondly the possibility to prove the result in such generality shows the flexibility and the robustness of our strategy, which does not employ any peculiar property of the function $x \mapsto x^\alpha$. In Remark \ref{hmas} we detail how to modify the proof of Theorem \ref{thm:main} to include such generalization.
	\end{remark}
	\subsection{Background}
In the case of discrete measures $\mu^-$ and $\mu^+$, the minimization problem \eqref{mainp} was suggested by Gilbert \cite{Gilbert}, who proposed finite directed weighted graphs $G$ as transportation networks. For arbitrary measures $\mu^-$ and $\mu^+$, two generalizations of the the Gilbert problem have been proposed. On one hand, the above description in terms of traffic paths is due to Xia \cite{Xia,xia2}, and it is related to a problem which arises in the characterization of weakly approximable Sobolev maps with values in a manifold \cite{hrActa}.
On the other hand, a different model was introduced and studied in \cite{MSM,BCM}: here the transportation networks (called \emph{traffic plans}) consist in measures
on the set of Lipschitz paths, where each path represents the trajectory of a single particle.

In both models, the existence of optimizers in the minimization problem has been established \cite{Xia,MSM,BeCaMo,brabutsan,Pegon} (see also the reference book \cite{BCM}). The correspondence between traffic plans and traffic paths can be established by means of Smirnov's theorem on the structure of acyclic, normal 1-dimensional currents \cite{Smirnov93}. Indeed, the two
formulations were proved to be equivalent (see \cite{BCM,Pegon} and references therein).
%and in particular the link between the last two of them is encoded in a deep result, due to Smirnov, on the structure of acyclic, normal 1-dimensional currents (see Theorem~\ref{s-decompcurr}). 
Under some restrictions on $\alpha, \mu^-$ and $\mu^+$, optimizers exhibit regularity properties both in the interior (roughly speaking, they are locally finite graphs) and close to their boundary, that is the supports of $\mu^\pm$ \cite{xia2,MR2250166,DevSolElementary,morsant,xiaBoundary,BraSol}.

The models described above can be used and generalized to describe a variety of problems related to branched transportation: for instance, one can study the mailing problem \cite{BCM} (for which the first stability result was proved in \cite{CDRM3}), the urban planning model \cite{BranK}, including two different regimes of transportation, or the recent multi-material transport problem \cite{MMT,MMST}, allowing simultaneous transportation of different goods or commodities. Recently, shape optimization problems related to the functional \eqref{eqn:alphamass} were analysed in \cite{PeSaXia,BrSun} and similar branching structures are observed in superconductivity models and for minimizers of Ginzburg-Landau type functionals, see for instance \cite{chok3,chok,chok1,chok2,con}. 
 
Explicit optima are known only in few (mainly discrete) cases; for this reason, some effort has been put in developing numerical strategies to compute minimizers, for instance in term of phase-field approximations \cite{OuSan,BCF,BLS}, in the spirit of numerical calibrations \cite{massoubo,BOO}, or exploiting the convex nature of different formulations of some aspects of the problem (which is overall highly nonconvex) \cite{marmass,marmass1,BranRS}. %Numerical simulations with a different aim are implemented in the recent works \cite{massoubo} and \cite{BOO}. Here the novel formulations of the Steiner-tree problem and the Gilbert-Steiner problem, introduced in \cite{marmass1} and \cite{marmass}, are exploited to find numerical calibrations: functional-analytic tools which can be used to prove the minimality of a given configuration.

\begin{remark}[(Stability in previous works)]\label{remarkone}
The answer to the stability question was previously known for $\alpha  \in (1-\sfrac1d,1]$. In this case, a simple argument relies on the fact that the minimal transport energy $W^\alpha(\nu_n,\nu)$ metrizes the weak-$*$-convergence of probability measures $\nu_n\overset{*}{\rightharpoonup} \nu$  (see \cite[Lemma 6.11 and Proposition 6.12]{BCM}). This property is false for $\alpha\leq 1-\sfrac1 d$, as shown in \cite{CDRM1}. The threshold
%\begin{equation}
%\label{ass:threshold}
$\alpha=1-\sfrac{1}{d}$
%\end{equation}
appears also because for $\alpha$ above this value any two probability measures with compact support in $\R^d$ can be connected with finite cost. The same threshold is then recurrent in other results: for instance, above the threshold interior regularity holds (see \cite[Theorem 8.14]{BCM}) and a possible proof is obtained using the stability property. % The finiteness of the cost, as well as the continuity of the minimum cost, fails for values of $\alpha$ smaller or equal to the value \eqref{ass:threshold}.
\end{remark}

\subsection{Strategy of the proof} In analogy with previous works \cite{BeCaMo, BCM,CDRM1}, to prove Theorem~\ref{thm:main} we assume by contradiction that $T$ is not optimal, denote $T_{opt}$ a minimizer, and we construct a better competitor for $T_n$ ($n$ large enough) by ``sewing'' a small portion of the traffic path $T_n$ with a large portion of $T_{opt}$.
In the following we shortly describe some of the main ideas and difficulties behind the proof of Theorem~\ref{thm:main}.

\subsubsection{Lagrangian description of traffic paths}
By means of Smirnov theorem we decompose the optimal path $T_n$ as a superposition of curves without cancellations. At difference from previous works, %Differently from \cite[Theorem 1.2]{CDRM1}, %in Theorem~\ref{thm:main} the
our energy competitor for $T_n$ is not only expressed in Lagrangian terms as a cut and paste of trajectories, to exploit the full power of the \emph{slicing} operation defined for currents (see \S \ref{s:slicing}).

\subsubsection{Cancellations in the Lagrangian description of $T$}
A technical difficulty for our construction is related to the fact that, although the limit of the Lagrangian descriptions of $T_n$ provides a Lagrangian description of $T$, the latter could contain cycles and cancellations at the level of currents. This issue did not appear in \cite[Theorem 1.2]{CDRM1} because there the convergence $T_n \wstarto T$ was not necessary to obtain a cheap connection of the slices.
To overcome this and obtain a lower semi-continuity result which keeps track in the limit of those Lagrangian trajectories which have opposite orientations and therefore they would cancel at the Eulerian level, we employ some ideas from the theory of currents with coefficients in normed groups (see \S \ref{s:g-currents}).

\subsubsection{Sewing trajectories}
Lemma \ref{high_multiplicity} shows that, even though $\MM$ does not metrize the weak-$*$ convergence of measures for $\alpha$ below the critical threshold (as explained in Remark~\ref{remarkone}), this holds true on the class of atomic measures with uniformly bounded energy (the energy of an atomic measure is defined in \eqref{defn:alpha-mass-meas}). 
This lemma is applied to the slices of some portions of $T_n$ and $T$ along the boundary of small cubes and it allows us to have a cheap connection between $T_n$ and $T$ in proximity of the boundary. For such operation we need to exploit the convergence of the slices of $T_n$ to the slices of $T$: for this reason we cannot directly connect the trajectories of $T_n$ to the trajectories of $T_{opt}$. 

\subsubsection{Comparison with previous strategies}
In \cite[Theorem 1.2]{CDRM1} we employed a dimension-reduction argument to cut the trajectories of $T_n$ and glue them with the trajectories of $T_{opt}$.
There are three substantial differences in the approach we adopt in the present paper: firstly, in the previous work we guaranteed the smallness of the connection by making it act on a $d-1$ dimensional surface (hence the bound $\alpha>1-\sfrac{1}{d-1}$); 
secondly, to guarantee the smallness of the connection we required that $\mu^\pm$ were supported on an $\Haus^1$-null set;
lastly, while in \cite[Theorem 1.2]{CDRM1} the connection acted on Lagrangian trajectories, in this paper we need to perform the slicing at the Eulerian level of currents, possibly introducing cancellations in mass.

\section{Notation and preliminaries}\label{s:notation}
\subsection{Sets and Measures} 
We add below a list of frequently used notations:
\begin{itemizeb}\leftskip 0.8 cm\labelsep=.3 cm
\item[${\bf e_1},\dots,{\bf e_d}$]
standard basis of $\R^d$;

\item[$B(x,r)$]
\emph{open} ball with center $x$ and radius $r$;

\item[$\overline A$]
\emph closure of the set $A$;

\item[$1_E$] 
characteristic function of a set $E$, taking values $0$ and $1$;

\item[${\rm Im}\gamma$] 
image (or support) of a curve $\gamma$;

\item[$|v|$]
Euclidean norm of a vector $v\in \R^d$;

\item[${\rm dist}(x,A)$] 
$:=\inf_{y\in A}\{|x-y|\}$, distance between the point $x$ and the set $A$; we also denote ${\rm dist}(A,B):=\inf_{y\in A}\{{\rm dist}(y,B)\}$ and $B(A,\rho):=\{x:\dist(x,A)<\rho\}$;

\item[$\M_+(Y)$]
set of positive Radon measures on the space $Y$; we use $\mathscr P(Y)$ for the subset of probability measures; 

%\item[$\M_+(Y)$]
%subset of $\mathscr{M}(Y)$ of positive measures;
%
%\item[$\mathscr P(Y)$]
%subset of $\M_+(Y)$ of probability measures;
%
%\item[$\Tan(S,x)$]
%tangent space to $S$ at a point $x$, where $S$ is a surface 
%(submanifold) of class $C^1$ in $\R^n$ or a rectifiable set
%(see \S\ref{s-rectif});
%
\item[$f\mu $]
measure associated to a measure $\mu$ and a 
function $f$, namely 
$[f\mu](E):=\int_E f \, d\mu$;
\item[$\mu\trace E$]
$:=1_E\mu$, restriction of a measure $\mu$ to a set $E$;
\item[$f_\#\,\mu$]
push-forward of a measure $\mu$ on $Y$
according to a map $f:Y\to Y'$, that is,
the measure on $Y'$ given by
$[f_\#\,\mu](E):=\mu(f^{-1}(E))$;

\item[$|\mu|$] 
total variation measure associated to a real- or vector-valued 
measure $\mu$; we call \emph{positive} and \emph{negative part} of a real-valued measure $\mu$ respectively the measures $\sfrac{1}{2}(|\mu|+\mu)$ and $\sfrac{1}{2}(|\mu|-\mu)$;
\item[$\supp (\mu)$]
support of $\mu$; we say that $\mu$ is \emph{supported} on $E$ if $|\mu|(Y\setminus E)=0$; we say that two measures $\mu$ and $\nu$ are \emph{mutually singular} if $\mu$ is supported on a set $E$ such that $|\nu|(E)=0$;

\item[$\Mass(\mu)$]
$:=|\mu|(Y)$, mass of a measure $\mu$ on a space $Y$;
\item[$\mu\leq\nu$]
means that $\mu(A)\leq\nu(A)$ for every Borel set $A$;

\item[$\delta_x$]
Dirac delta at the point $x$;

\item[$\Haus^k$]
$k$-dimensional Hausdorff measure;

\item[$L^p(\mu)$]
space of $p$-integrable functions \wrt $\mu$; we also use $L^p(\mu; V)$ for $p$-integrable functions with values in the normed space $V$.
\item[$\|\cdot \|_p$]
$L^p$-norm; we use $\|\cdot\|_\infty$ also to denote the supremum norm;

\item[$\mu_n\wstarto\mu$]
denotes the weak-$*$ convergence of measures, that is $\int f d\mu_n\to\int f d\mu$ for every $f\in C^0_c$.

\end{itemizeb}

%  Given a $1$-rectifiable set $K$, we denote $T_xK$ the approximate tangent space of $K$ at $x$, which exists for $\Haus^1$-almost every point $x \in K$, \cite[Chapter 3]{SimonLN}. 

%   A Radon measure $\mu$ on $\Omega$ is said to be $1$-rectifiable if there exists a $1$-rectifiable set $K\subset \Omega$ such that $\mu= \theta\Haus^1 \trace K$, with $\theta\in L^1(\Haus^1 \trace K)$ is a integrable function.

%For a Radon measure \(\mu\) we consider its lower  and upper  \(1\)-dimensional densities at \(x\):
%\[
%\Theta_*^1(x,\mu)=\liminf _{r\to 0} \frac{\mu(B(x,r))}{ r}, \qquad \Theta^{1*}(x,\mu)=\limsup_{r\to 0} \frac{\mu(B(x,r))}{ r},
%\]
%and in case these  two limits are equal we denote by \(\Theta^1(x,\mu)\) their common value. Note that if $\mu= \theta\Haus^d \trace K$,  with \(K\) rectifiable and  $\theta\in L^1(\Haus^1 \trace K)$, then \(\theta(x)=\Theta_*^1(x,\mu)=\Theta^{1*}(x,\mu)\) \(\mu\)-a.e. \(x\), see~\cite[Chapter 3]{SimonLN}.
%For a $1$-rectifiable Radon measure $\mu= \theta\Haus^1 \trace K$, we will define the $\alpha$-mass to be 
%$$
%\Mass^\alpha(\mu):=(\theta^\alpha \Haus^1 \trace K)(\R^d)=\int_K \theta^\alpha \, d\Haus^1.
%$$

\subsection{Rectifiable sets and currents}\label{ss:currents} We recall here the basic terminology related to $k$-dimensional rectifiable sets and currents. We refer the reader to the introductory presentation given in the standard textbooks \cite{SimonLN}, \cite{KrantzParks} and to the most complete treatise \cite{FedererBOOK}. For the purposes of this paper, we point out that in \cite{CDRM1} the same was used and more extensively presented in the context of branched transport.
%where the same was used and more extensively presented in the context of branched transport.

For $k=0,1,\dots,d$, a set \(E\subset \R^d\) is said %\emph{countably \(k\)-rectifiable} (or simply 
\emph{\(k\)-rectifiable} if it can be covered, up to an \(\Haus^k\)-negligible set, by countably many $k$-dimensional submanifolds of class \(C^1\). 

In the sequel we use the following notation: 
\begin{itemizeb}\leftskip 0.8 cm\labelsep=.3 cm
\item[${\rm{Tan}}(E,x)$]
tangent $k$-plane to the $k$-rectifiable set $E$ at the point $x$ (defined at $\Haus^k$-a.e. $x\in E$);

\item[$\D^k(\R^d)$]
space of smooth and compactly supported differential $k$-forms on $\R^d$. The topology on $\D^k(\R^d)$ is analogous to the topology defined on the space of test functions with respect to which distributions are dual;

\item[$\D_k(\R^d)$]
\emph space of $k$-dimensional currents in $\R^d$, namely continuous linear functionals on $\D^k(\R^d)$;

\item[$\langle T,\omega\rangle$] 
duality pairing between a $k$-current $T$ and a $k$-form $\omega$. We use the same symbol for the duality pairing between a $k$-covector and a $k$-vector;

\item[$T_n \wto T$] 
weak-$*$ convergence of currents, namely $\langle T_n,\omega\rangle\to \langle T,\omega\rangle$ for every $\omega\in\D^k(\R^d)$;

\item[$\partial T$]
boundary of $T$, that is the $(k-1)$-dimensional current defined  via 
$\langle\partial T, \phi \rangle := \langle T, d\phi\rangle$ for every $\phi\in \D^{k-1}(\R^d)$;

\item[$\|\omega\|$]
$:=\sup_{x,\tau}\{\langle \omega(x),\tau\rangle$: $x\in\R^d$, $\tau$ is a unit simple $k$-vector$\}$ is the comass norm of the form $\omega$;

\item[$\Mass(T)$] 
$:=\sup_{\omega}\{\langle T, \omega\rangle$: $\|\omega\|\le 1\}$ is the mass of the current $T$;

\item[$T=\vec{T} |T|$]
representation of a current with finite mass (or a vector valued measure)\footnote{Even though currents with finite mass and vector valued measures can be naturally identified, the convergence of currents does not imply in general convergence of vector valued measures. This is the reason for using the two different symbols $\mu_n\wstarto\mu$ and $T_n\wto T$.}, namely 
$\langle T, \omega\rangle = \int_{\R^d} \langle\omega(x), \vec{T}(x)\rangle d\|T\|(x)
$, where $|T|\in\Mpiu(\R^d)$ and $\vec{T}$ is a unit $k$-vector field. In particular $\Mass(T)=\Mass(|T|)$;

\item[$\supp(T)$]
support of $T$ (in the distributional sense);

\item[$\mathbf{N}_k(\R^d)$]
normal currents, that is currents $T$ such that both $T$ and $\partial T$ have finite mass;

\item[$\partial_+T, \partial_-T$]
(for $T\in \mathbf{N}_1(\R^d)$) positive and negative part of the (finite) measure $\partial T$;

\item[$T \trace A$] 
restriction of a current $T$ with finite mass to the Borel set $A$, namely $\langle T \trace A, \omega\rangle := \int_{A} \langle\omega(x), \vec{T}(x)\rangle d|T|(x)$;

\item[$\Flat(T)$]
flat norm of the current $T$, that is $\Flat(T):=\inf\{\Mass(R)+\Mass(S): T=R+\partial S,\, R \in \D_k(\R^d),\,  S \in \D_{k+1}(\R^d) \}$;

\item[$\mathbf{R}_{k}(\R^d)$]
space of $k$-rectifiable currents, represented as $T=[E,\tau,\theta]$, which means
$\langle[E,\tau,\theta],\omega\rangle:= \int_{E} \langle\omega(x), \tau(x)\rangle \, \theta(x) d \Haus^k(x),$ where $E$ is a $k$-rectifiable set, $\tau(x)$ is a unit, simple $k$-vector field spanning ${\rm{Tan}}(E,x)$ for $\Haus^k$-a.e $x\in E$, and $\theta\in L^1_{loc}(\Haus^k\trace E)$; in particular $\Mass(T)=\int_{E} |\theta(x)| d \Haus^k(x)$;

\item[$\Mass ^\alpha (T)$]
$:=\int_E |\theta|^\alpha(x) d\Haus^k(x)$ is the $\alpha$-mass of $T$, where $\alpha \in (0, 1]$ and $T=[E,\tau,\theta]$. We set $\Mass ^\alpha (T)=+\infty$ for every $T\in \mathbf{N}_{k}(\R^d)\setminus \mathbf{R}_{k}(\R^d)$.
\end{itemizeb}
\begin{remark}[(Flat norm and weak-$*$ convergence)]\label{rmk_andrea}
In general $\Flat(T_n-T)\to 0$ implies that $T_n\wto T$. If the $T_n$'s are all supported on a common compact set, and they have equi-bounded masses and masses of the boundaries the reverse is also true. This fact can be easily deduced from \cite[Theorem 4.2.17(1)]{FedererBOOK}.
\end{remark}

\subsection{Traffic paths}

Fix $R>0$. From now on, by $X$ we denote the closed ball of radius $R$ in $\R^d$ centered at the origin. Following \cite{Xia} and 
\cite{BCM}, given two positive measures $\mu^-,\mu^+ \in \M_+(X)$ with the same total variation, we define the set $\TP(\mu^-,\mu^+)$ of the \emph{traffic paths} connecting $\mu^-$ to $\mu^+$ as $$\TP(\mu^-,\mu^+):=\{T\in\mathbf{N}_1(\R^d): \supp(T)\subset X, \partial T=\mu^+-\mu^-\},$$
and the \emph{minimal transport energy} associated to $\mu^-,\mu^+$ as
$$W^{\alpha}(\mu^-,\mu^+):= \inf \{\MM(T): T \in \TP (\mu^- ,\mu^+)\}.$$
%{\color{red} Given a traffic path $T\in \TP(\mu^-,\mu^+)$, we will call $\mu^-,\mu^+$ respectively the \emph{irrigating} and the \emph{irrigated measure}.MAI USATO}
Moreover we define the set of \emph{optimal traffic paths} connecting $\mu^-$ to $\mu^+$ by 
\begin{equation}
	\label{eqn:otp}
	\OTP (\mu^- ,\mu^+):=\{T \in \TP (\mu^- ,\mu^+) : \MM(T)=W^\alpha(\mu^-,\mu^+) \}.
\end{equation}
%Given a rectifiable current $T$ with compact support in $\R^d$ and a Lipschitz map $f:\R^d\to\R^m$, we denote by $f_\sharp T$ the push-forward of $T$ according to $f$, i.e the rectifiable current in $\R^m$ defined by
%$$\langle f_\sharp T, \omega\rangle:= \langle T, f^\sharp\omega\rangle ,\quad\text{ for every } \omega\in \D^k(\R^m)$$
%where $f^\sharp\omega$ is the pull-back of the form $\omega$.
%
As observed in \cite[Proposition 2.5]{CDRM1}, in order to minimize the $\alpha$-mass among currents with boundary in $X$, it is not restrictive to consider only currents supported in $X$. 
\subsection{Structure of optimal traffic paths and good decompositions}\
In the class of rectifiable 1-currents, some basic objects are given by the ones associated to Lipschitz simple curves with finite length. The aim of this subsection is to describe the so called ``superposition principle'' according to which every acyclic normal 1-current can be written as a weighted average of such curves.

We denote by $\Lip$ the space of $1$-Lipschitz curves $\gamma: [0,\infty) \to \R^d$ which are eventually constant (and hence of finite length). For $\gamma\in\Lip$ we denote by $T_0(\gamma)$ and $T_\infty(\gamma)$ the values
 $$T_0(\gamma):=\sup\{t:\gamma \mbox{ is constant on }[0,t]\}
 \qquad T_\infty(\gamma):=\inf\{t:\gamma \mbox{ is constant on }[t,\infty)\}.$$ 
 Given $\gamma \in \Lip$,
 we call $\gamma(\infty):=\lim_{t\to\infty}\gamma(t)$.
 We say that a curve $\gamma\in\Lip$ is \emph{simple} if $\gamma(s)\neq\gamma(t)$ for every $T_0(\gamma)\leq s<t\leq T_\infty(\gamma)$ such that $\gamma$ is non-constant in the interval $[s,t]$.
 
We associate canonically to each simple curve $\gamma\in \Lip$,
the rectifiable $1$-current 
% \begin{equation}
% \label{eqn:Rgamma}
$ I_\gamma:=[{\rm{Im}}\gamma,\sfrac{\gamma'}{|\gamma'|},1].
$% \end{equation}
 It is easy to check that
% \begin{equation}
 $\Mass(I_\gamma)=\Haus^1(\rm{Im}\gamma)$
 %\end{equation}
 and
%\begin{equation}
%\label{eqn:r-gamma-boundary}
$\partial I_\gamma=\delta_{\gamma(\infty)}-\delta_{\gamma(0)}$;
%\end{equation}
since $\gamma$ is simple, if it is also non-constant, then $\gamma(\infty) \neq \gamma(0)$ and $\Mass(\partial I_\gamma)=2$.

A normal current $T\in \mathbf{N}_1(\mathbb{R}^d)$ is said \emph{acyclic} if there exists no non-trivial current $S$ such that
$$\partial S=0 \qquad \mbox{and} \qquad \Mass(T)=\Mass(T-S)+\Mass(S).$$

We recall a fundamental result of Smirnov (\cite{Smirnov93}) which establishes that every acyclic normal 1-current can be written as a weighted average of simple Lipschitz curves in the following sense.
 
\begin{definition}[\bf{(Good decomposition)}]\label{defn:GD}
Let $T\in \mathbf{N}_1(\mathbb{R}^d)$ be represented as a vector-valued measure $\vec T |T|$, and let $P \in \M_+(\Lip)$ be a finite positive measure, supported on the set of curves with finite length, such that 
\begin{equation}
\label{eqn:buona-dec}
T=\int_{\Lip} I_\gamma d P (\gamma),
\end{equation}
namely for every smooth compactly supported 1-form $\varphi: \R^d \to \R^d$ it holds
\begin{equation}
\label{eqn:good-dec-operativa}
\int_{\R^d} \langle\varphi, \vec T\rangle \,d| T|=
\int_{\Lip} \int_{0}^\infty \langle\varphi(\gamma(t)), \gamma'(t)\rangle\, dt \, d P(\gamma).
\end{equation}
We say that $P$ is a good decomposition of $T$ if $P$ is supported on non-constant, simple curves and satisfies the equalities
\begin{equation}
\label{eqn:buona-dec-mass-T}
\Mass(T) 
= \int_{\Lip} \Mass(I_\gamma) d P(\gamma)
= \int_{\Lip} \Haus^1({\rm{Im}}\gamma) d P(\gamma)
  \, ; 
\end{equation}
\begin{equation}
\label{eqn:buona-dec-mass-boundaryT}
\Mass(\partial T) 
= \int_{\Lip} \Mass(\partial I_\gamma) d P(\gamma)
= 2 P({\Lip})
  \, .
\end{equation}
\end{definition}

It has been shown in \cite[Theorem 10.1]{PaoliniStepanov} that optimal traffic paths $T\in \OTP(\mu^-, \mu^+)$ are acyclic, hence they admit such a good decomposition. In the next result, we collect some useful properties of good decompositions, whose proof can be found in {\cite[Proposition 3.6]{CDRM1}}.%In the following proposition we collect some useful properties of good decompositions. All these properties, except for (4), are more or less folkloric (e.g. (1) can be found in \cite[Theorem 5.1]{PaoliniStepanov1} and also in \cite{Smirnov93}; (2) can be found in \cite{butprasolste,brapet} and (3) can be found in \cite[Lemma 4.16]{PaoliniStepanov1}). We provide the short proof for the reader's convenience. Further properties will be given in Proposition~\ref{p:propr_good_dec_2}.

\begin{theorem}[(Existence and properties of good decompositions){\cite[Theorem 5.1]{PaoliniStepanov1}} and {\cite[Proposition 3.6]{CDRM1}}]\label{t:propr_good_dec}
Let $\mu^-, \mu^+ \in \M_+(\R^d)$ and $T \in \OTP(\mu^-, \mu^+)$ with finite $\alpha$-mass. Then $T$ is acyclic and there is a Borel finite measure $P$ on $\Lip$ such that %$T$ can be decomposed as
%$$T=\int_{\Lip} R_\gamma d \pi (\gamma)$$ and 
$P$ is a good decomposition of $T$. Moreover, if $P$ is a good decomposition of $T \in \mathbf{N}_1(\mathbb{R}^d)$  as in \eqref{eqn:buona-dec}, the following statements hold:

\begin{enumerate}
\item The positive and the negative parts of the signed measure $\partial T$ are
%\begin{equation}
%\label{buona-dec-boundary}
$\partial_- T = \int_{\Lip}\delta_{\gamma(0)} d P (\gamma)
$%\qquad \mbox{
and %} \qquad
$\partial_+ T = \int_{\Lip}\delta_{\gamma(\infty)} d P (\gamma).
$%\end{equation}
%\item The positive and the negative parts of the signed measure $\partial T$ are
%\begin{equation}
%\label{buona-dec-boundary}\partial_- T = \int_{\Lip}\delta_{\gamma(0)} d \pi (\gamma)
%\qquad \mbox{and} \qquad
%\partial_+ T = \int_{\Lip}\delta_{\gamma(\infty)} d \pi (\gamma).
%\end{equation}
\item If $T= T[E, \tau, \theta]$ is rectifiable, then
$|\theta(x)| = P(\{\gamma: x \in {\rm{Im}}\gamma \})$ for $\Haus^1$-a.e. $x\in E$.%\item If $T= T[E, \tau, \theta]$ is rectifiable, then
%\begin{equation}
%\label{eqn:dens-acycl}
%|\theta(x)| = \pi(\{\gamma: x \in {\rm{Im}}\gamma \}) \qquad \mbox{for $\Haus^1$-a.e. $x\in E$.}
%\end{equation}
	\item For every $P' \leq P$ the representation
%	 \begin{equation}
%	\label{eqn:Tprimo}
$	T' := \int_{\Lip} I_\gamma  dP'( \gamma )$
%	 \end{equation}
	is a good decomposition of $T'$; moreover, if $T= T[E, \tau, \theta]$ is rectifiable, then $T'$
	can be written as $T'=T[E, \theta',\tau]$ with $|\theta'| \leq \min\{|\theta|, P'(\Lip)\}$ and $\theta\cdot\theta'\geq 0$, $\Haus^1$-a.e..

%		
%	\item SERVE? If $\Mass^{\alpha}(T)<\infty$, for every $\e>0$ there exists $\delta: = \delta(T, \e)>0$ such that for every $\pi' \leq \pi$ with $\pi'(\Lip) \leq \delta$ we have
%	\begin{equation}
%	\label{eqn:t'decomp}
%	\MM (T') \leq \e,
%	\end{equation}
%	where $T'$ is defined by \eqref{eqn:Tprimo}.
\end{enumerate}
\end{theorem}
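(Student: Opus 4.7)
The plan is to deduce the statement from Smirnov's superposition principle \cite{Smirnov93} together with the acyclicity of optimal traffic paths from \cite{PaoliniStepanov}. For the existence part, since $T\in\OTP(\mu^-,\mu^+)$ has finite $\alpha$-mass it is acyclic, so Smirnov's theorem produces a finite positive Borel measure $P_0$ on $\Lip$ representing $T$ via \eqref{eqn:buona-dec} with $\Mass(T)=\int\Mass(I_\gamma)\,dP_0(\gamma)$. I would then perform two surgeries on $P_0$: discard the curves which are constant (they contribute nothing to $T$), and replace each $\gamma$ by a simple sub-arc $\tilde\gamma$ having the same endpoints and $\mathrm{Im}\,\tilde\gamma\subseteq\mathrm{Im}\,\gamma$. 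This yields a measure $P$ supported on non-constant simple curves; the first mass identity \eqref{eqn:buona-dec-mass-T} survives because $\Mass(I_{\tilde\gamma})\le\Mass(I_\gamma)$ and a strict loss would allow one to reconstruct a nontrivial cycle in $T$. Identity \eqref{eqn:buona-dec-mass-boundaryT} is obtained by an analogous argument: any cancellation between $\int\delta_{\gamma(\infty)}\,dP$ and $\int\delta_{\gamma(0)}\,dP$ in the computation of $\Mass(\bd T)$ would again encode a cycle.

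Property (1) is then immediate. Since $\bd I_\gamma=\delta_{\gamma(\infty)}-\delta_{\gamma(0)}$ for $\gamma$ simple and non-constant, integrating against $P$ writes $\bd T=\nu_+-\nu_-$ with $\nu_\pm$ the two proposed integrals. The identity $\Mass(\nu_+)+\Mass(\nu_-)=2P(\Lip)=\Mass(\bd T)$ coming from \eqref{eqn:buona-dec-mass-boundaryT} forces, via the triangle inequality in total variation, $\nu_+$ and $\nu_-$ to be mutually singular, so they realize the Jordan decomposition of $\bd T$.

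For property (2), I would argue as follows. For $\Haus^1$-a.e.\ $x\in E$ the approximate tangent to $E$ is well defined, and a Fubini-type application of \eqref{eqn:good-dec-operativa} against a form concentrated near $x$ shows that for $P$-a.e.\ curve passing through $x$, $\gamma'/|\gamma'|$ coincides with $\pm\tau(x)$ at the corresponding parameter. Writing $\theta(x)=\int_{\gamma\ni x}\epsilon_\gamma(x)\,dP(\gamma)$ for orientation signs $\epsilon_\gamma(x)\in\{-1,+1\}$ and computing $\int_E|\theta|\,d\Haus^1$ in two ways via the coarea formula, the saturation of \eqref{eqn:buona-dec-mass-T} forces $\epsilon_\gamma(x)$ to be $P$-a.e.\ constant in $\gamma$ for $\Haus^1$-a.e.\ $x\in E$, whence $|\theta(x)|=P(\{\gamma:x\in\mathrm{Im}\,\gamma\})$.

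Property (3) then follows readily: $P'\le P$ inherits the support on non-constant simple curves, and the two mass identities for $T'$ are forced by additivity together with those for $T$ and $T-T'$ (a strict inequality for $T'$ would contradict $\Mass(T)=\int\Mass(I_\gamma)\,dP$ applied to $P=P'+(P-P')$). Applying (2) to $T'$ then gives $|\theta'(x)|=P'(\{\gamma:x\in\mathrm{Im}\,\gamma\})\le\min\{|\theta(x)|,P'(\Lip)\}$, and the sign relation $\theta\cdot\theta'\ge 0$ follows because both are determined by the $P$-a.e.\ common orientation of curves through $x$ identified in (2). I expect the main obstacle to lie in the sign rigidity of (2): one must exclude that distinct curves through a typical rectifiable point traverse it in opposite directions, a step which depends decisively on the saturation of \eqref{eqn:buona-dec-mass-T} and hence on acyclicity.
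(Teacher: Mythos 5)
Your proposal is correct and follows the same route as the sources the paper cites (Smirnov's superposition principle, the acyclicity of optimal paths from \cite{PaoliniStepanov}, and the mass-saturation arguments of \cite{CDRM1}); in particular, the key observations — that the equality $\Mass(\partial T)=2P(\Lip)$ forces mutual singularity of $\int\delta_{\gamma(\infty)}\,dP$ and $\int\delta_{\gamma(0)}\,dP$ in (1), that equality in $|\theta|\le P(\{\gamma:x\in\mathrm{Im}\,\gamma\})$ is forced by the saturation of \eqref{eqn:buona-dec-mass-T} in (2), and that the additivity $\Mass(T)=\int\Mass(I_\gamma)\,dP'+\int\Mass(I_\gamma)\,d(P-P')$ squeezed between sub-additivity and the good-decomposition identity gives (3) — are exactly the ones used there. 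One cosmetic remark: the "surgery" replacing $\gamma$ by a simple sub-arc can be seen to be vacuous $P_0$-a.e. by the very cycle argument you invoke (the cycle $\int(I_\gamma-I_{\tilde\gamma})\,dP_0$ must vanish and its mass equals $\int\Mass(I_\gamma-I_{\tilde\gamma})\,dP_0$), and the orientation-rigidity step in (2) is a byproduct rather than a prerequisite of the Fubini/mass-saturation argument, though you do need it for the sign claim $\theta\cdot\theta'\ge 0$ in (3).
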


\begin{remark}[(Lagrangian description of the limit)]\label{rmk:limit}
Let $T_n \rightharpoonup T $ be a sequence of currents converging weakly-$*$ with uniformly bounded mass  and mass of the boundaries and let $P_n$ be good decompositions of $T_n$. Up to a subsequence, $P_n \wstarto P \in\Prob(\Lip) $ (thanks to \eqref{eqn:buona-dec-mass-boundaryT} and to \eqref{eqn:buona-dec-mass-T}, which ensure pre-compactness of the sequence of measures). Then $P$ might fail to be a good decomposition of $T$, but \eqref{eqn:buona-dec} remains valid.
%$$T=\int_{\Lip}R_\gamma d\pi(\gamma).
%$$
Indeed, every smooth compactly supported $1$-form $\omega$, induces a continuous function on curves $\Lip \ni \gamma \to \langle I_\gamma, \omega\rangle$ and we can test both weak-$*$ convergences $T_n \rightharpoonup T $ and $P_n \wstarto P$ to obtain the equality.
\end{remark}

\section{Preliminary results}\label{sec:con}
Given a cube $Q\subset\R^d$ whose faces are parallel to the coordinate hyperplanes and $k\in\N$ we denote 
$$\Lambda(Q,k):=\{Q^\ell\}_{\ell=1}^{2^{kd}}$$ the collection of the $2^{kd}$ cubes obtained dividing each edge of $Q$ into $2^k$ subintervals of equal length. We denote by $$\Sk(Q,k):=\bigcup_{\ell=1}^{2^{kd}}\partial Q^\ell$$ the $(d-1)$-skeleton of the grid $\Lambda(Q,k)$.
Moreover we denote by $\rho Q^\ell$ the concentric cube to $Q^\ell$, with homothety ratio $\rho$.

Given two cubes $Q, R$ , we define $\Lip (Q, R)$ as the set of curves in $\Lip$ which start in $Q $ and end in $R$, namely
$$\Lip (Q, R):= \{ \gamma \in \Lip: \gamma(0) \in Q, \ \gamma(\infty)\in R\}.$$
%Analogously, we define
%$$A(Q,k,\rho):=\{Q^\ell \setminus \rho Q^\ell\}_{\ell=1}^{2^{kd}}.$$ 

Given an atomic measure $\mu\in\M_+(X)$ of the form $\mu=\sum_{i\in\N}\theta_i\delta_{x_i}$, we define its $\alpha$-mass
\begin{equation}
\label{defn:alpha-mass-meas}
\MM(\mu) =  \sum_{i\in\N}\theta_i^\alpha.
\end{equation}
The alpha mass of a real-valued atomic measure is simply the sum of the $\alpha$-mass of its positive and its negative part (the $\alpha$-mass of a measure is considered to be infinite if the measure is not atomic).
If $\mu$ is atomic and supported on a cube $Q_l(x)\subset \R^d$, centred at $x$ and with diameter $l$, the \emph{cone} over $\mu$ with vertex $x$, is defined as the 1-current
\begin{equation}\label{defn:cone}
x\cone\mu:=\sum_{i\in\N}\theta_iS_i,
\end{equation}
where $S_i$ is the 1-dimensional current canonically associated to the oriented segment connecting $x$ to $x_i$. It is easy to check that  
\begin{equation}\label{stimecono}
\partial(x\cone\mu)=\mu-\bigg(\sum_{i\in\N}\theta_i\bigg)\delta_x \quad\mbox{ and }\quad \MM(x\cone\mu)\leq l\cdot\MM(\mu).
\end{equation}

\begin{lemma}[(Existence of a sequence of negligible nested grids)]\label{lemmagriglia}
Let $Q\subset\R^d$ be a cube. Let $\{\mu_n\}_{n\in\N}\subset \M_+(Q)$ be a countable family of measures. Then there exists a cube $Q'\supset Q$ such that 
\begin{equation}\label{griglia}
\mu_n(\mathcal{S}(Q',k))=0, \quad \mbox{ for all } (k,n)\in\N^2.
\end{equation}
\end{lemma}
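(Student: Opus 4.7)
The plan is to parametrize the candidate enclosing cubes by a single vertex, and to exploit the fact that every finite Borel measure on $\R$ has only countably many atoms. Fix $L$ larger than the side length of $Q$ and let $U\subset\R^d$ be the (non-empty and open) set of points $c\in\R^d$ such that the axis-aligned cube $Q'(c,L)$ of side $L$ having $c$ as ``lower-left'' vertex contains $Q$. For every such $c$ and every $k\in\N$, the $(d-1)$-skeleton $\Sk(Q'(c,L),k)$ is contained in the finite union of coordinate hyperplanes
\[
\bigcup_{i=1}^d\bigcup_{j=0}^{2^k} H_{i,j}^{c,k},\qquad H_{i,j}^{c,k}:=\{x\in\R^d: x_i=c_i+jL2^{-k}\};
\]
hence it is enough to find $c\in U$ such that $\mu_n(H_{i,j}^{c,k})=0$ for all $n,k\in\N$, $i\in\{1,\dots,d\}$ and $j\in\{0,\dots,2^k\}$.

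For each $n\in\N$ and $i\in\{1,\dots,d\}$, the pushforward of $\mu_n$ under the $i$-th coordinate projection is a finite Borel measure on $\R$, so the set $A_{n,i}:=\{t\in\R:\mu_n(\{x\in\R^d:x_i=t\})>0\}$ is at most countable. Set $A_i:=\bigcup_{n\in\N}A_{n,i}$ (still countable) and $D:=\{jL2^{-k}:k\in\N,\ 0\le j\le 2^k\}$. Then $H_{i,j}^{c,k}$ is $\mu_n$-negligible for every $n$ as soon as $c_i\notin A_i-D$, and the set $A_i-D$, being a countable union of countable translates of $A_i$, is itself countable and hence has empty interior in $\R$. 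Consequently the open set $U$ contains a point $c=(c_1,\dots,c_d)$ with $c_i\notin A_i-D$ for every $i\in\{1,\dots,d\}$; the corresponding cube $Q':=Q'(c,L)$ satisfies \eqref{griglia} by countable subadditivity.

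I do not foresee any serious obstacle: the argument is a standard Fubini-type density observation, and the only bookkeeping is the countability of $A_i-D$, which is immediate from that of $A_i$ and $D$.
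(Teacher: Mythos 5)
Your argument is correct and takes essentially the same approach as the paper: translate the candidate cube coordinate-by-coordinate, observe that in each coordinate only countably many translation parameters make the dyadic grid charged, and pick a good translate. The paper reaches the countability via the auxiliary measure $\mu:=\sum_n 2^{-n}\mu_n/\Mass(\mu_n)$ and the disjointness of the skeletons under irrational translations, while you reach it via the atoms of the coordinate pushforwards; these are interchangeable phrasings of the same Fubini-type observation.
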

\begin{proof}
%We can assume that $\Mass(\mu_n)=1$ for every $n$.
 Denote $\mu:=\sum_{n\in\N}2^{-n}\sfrac{\mu_n}{\Mass(\mu_n)}$. Let $Q''$ be cube such that $d(Q,(\R^d\setminus Q''))\geq 1$ and such that the edge length of $Q''$ is an integer number. For every $j=1,\dots,d$ and $k\in\N$ we denote 
$H_{j,k}$ the union of $2^k+1$ hyperplanes, orthogonal to ${\bf{e}}_j$, partitioning $Q''$ into $2^k$ slabs of equal volume. Denote also 
$$L_j:=\bigcup_{k\in\N}H_{j,k}.$$
Since $L_j+r{\bf{e}}_j$ is disjoint from $L_j+s{\bf{e}}_j$ whenever $r-s\in \R\setminus\Q$, then for every $j$ there exists $\rho_j\in[0,1]$ such that 
$$\mu(L_j+\rho_j{\bf{e}}_j)=0.$$
We conclude that $Q':=Q''+\sum_j\rho_j{\bf{e}}_j$ yields \eqref{griglia}.
\end{proof}

\subsection{A metrization property for $\MM$}\label{sec:metrizes}
We show that if a sequence $\mu_n$ of measures, satisfying a uniform bound on the $\alpha$-masses, weak-$*$ converges to a measure $\mu$, then the connection cost $W^\alpha(\mu_n,\mu)$ converges to zero, for every $\alpha \in (0,1)$ (compare with Remark~\ref{remarkone}, which requires instead $\alpha>1-\sfrac 1d$).
%This lemma will be applied to suitable rescaled slices of $T_n$ and $T$ in the statement of Theorem~\ref{thm:main} along the boundary of small cubes.
\begin{lemma}[(Metrization property for $\MM$)]\label{high_multiplicity}
Let $Q\subset\R^d$ be a cube and $C>0$. Let $\mu_n,\nu_n\in\Mpiu(Q)$ be atomic measures such that\footnote{We remind the reader that the symbol $\wto$ denotes the weak-$*$ convergence of $0$-currents. Under the assumptions of the lemma, this is equivalent to the weak-$*$ convergence of the associated real-valued measures.} $\mu_n- \nu_n\wto 0$ and for all $n\in\N$
$$\Mass(\mu_n)=\Mass(\nu_n), \quad \MM(\mu_n)+\MM(\nu_n)\leq C.$$ 	
Then $\lim_{n \to \infty} W^\alpha(\mu_n,\nu_n)= 0$.
\end{lemma}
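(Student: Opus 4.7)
The plan is to use a dyadic grid at scale $k$ to reduce the connection problem to a \emph{local} coning step on each subcube (cheap because the diameters are small) plus a \emph{residual} transport between the cube centers (cheap because the total discrepancy on cubes vanishes by weak-$*$ convergence). Both contributions will be driven to zero by first choosing $k$ large, then $n$ large.

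\textbf{Grid choice.} First I would apply Lemma~\ref{lemmagriglia} to the countable family $\{\mu_n,\nu_n\}_{n\in\N}$ to produce a cube $Q'\supset Q$ of edge length $l$ such that $\mu_n(\Sk(Q',k))=\nu_n(\Sk(Q',k))=0$ for every $k,n$. Fix $k$ and let $\{Q^\ell\}_{\ell=1}^{2^{kd}}$ be the cubes in $\Lambda(Q',k)$, with centers $c_\ell$ and diameters $l\sqrt d/2^k$. Because the measures do not charge the skeleton, every atom of $\mu_n,\nu_n$ lies in exactly one $Q^\ell$, so $\MM$ is additive in the decomposition.

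\textbf{Local coning step.} On each $Q^\ell$ cone $\mu_n\trace Q^\ell$ and $\nu_n\trace Q^\ell$ to $c_\ell$, and let $T_n^{(k)}$ be the (signed) sum of these cones over all $\ell$. By \eqref{stimecono} and additivity,
\[
\MM\big(T_n^{(k)}\big)\le \tfrac{l\sqrt d}{2^k}\sum_\ell\big[\MM(\mu_n\trace Q^\ell)+\MM(\nu_n\trace Q^\ell)\big]
\le \tfrac{l\sqrt d\,C}{2^k},
\]
and $\partial T_n^{(k)}=(\mu_n-\nu_n)-\sum_\ell a_\ell^{(n)}\delta_{c_\ell}$ where $a_\ell^{(n)}:=\mu_n(Q^\ell)-\nu_n(Q^\ell)$.

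\textbf{Residual transport.} Since $\sum_\ell a_\ell^{(n)}=0$, I would build $S_n$ by coning $\sum_\ell (a_\ell^{(n)})^+\delta_{c_\ell}$ and $\sum_\ell (a_\ell^{(n)})^-\delta_{c_\ell}$ to the common center $x_0$ of $Q'$ and taking the difference, so that $\partial S_n=\sum_\ell a_\ell^{(n)}\delta_{c_\ell}$. Using \eqref{stimecono} and the concavity inequality $\sum_{j=1}^N t_j^\alpha\le N^{1-\alpha}(\sum_j t_j)^\alpha$,
\[
\MM(S_n)\le l\sqrt d\sum_\ell |a_\ell^{(n)}|^\alpha
\le l\sqrt d\,(2^{kd})^{1-\alpha}\Big(\textstyle\sum_\ell|a_\ell^{(n)}|\Big)^{\!\alpha}.
\]
The current $T_n^{(k)}+S_n$ then belongs to $\TP(\nu_n,\mu_n)$, so
\[
W^\alpha(\mu_n,\nu_n)\le \frac{l\sqrt d\,C}{2^k}+l\sqrt d\,(2^{kd})^{1-\alpha}\Big(\textstyle\sum_\ell|a_\ell^{(n)}|\Big)^{\!\alpha}.
\]

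\textbf{Letting $n\to\infty$ and choosing $k$.} For each fixed $\ell$, since $\mu_n-\nu_n$ does not charge $\partial Q^\ell$ and $\mu_n-\nu_n\wto 0$, one can approximate $1_{Q^\ell}$ uniformly by continuous functions off a neighborhood of $\partial Q^\ell$ and obtain $a_\ell^{(n)}\to 0$. Hence for fixed $k$ the finite sum $\sum_\ell|a_\ell^{(n)}|\to 0$ as $n\to\infty$. Given $\eta>0$, I would first pick $k$ so that the first term is $<\eta/2$, then pick $n_0$ so that for $n\ge n_0$ the second term is $<\eta/2$. This yields $W^\alpha(\mu_n,\nu_n)\to 0$.

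\textbf{Main obstacle.} The delicate point is that the exponent $kd(1-\alpha)$ in the residual estimate blows up with $k$, so it is essential that the grid be compatible with \emph{every} $\mu_n,\nu_n$ simultaneously — otherwise the individual errors $a_\ell^{(n)}$ need not vanish. This is precisely what Lemma~\ref{lemmagriglia} provides, and it is what allows the two-step limit (first $k$, then $n$) to close.
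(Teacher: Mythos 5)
Your proposal is correct and takes essentially the same approach as the paper's proof: apply Lemma~\ref{lemmagriglia} to obtain a grid that is null for all $\mu_n,\nu_n$, cone each $(\mu_n-\nu_n)\trace Q^\ell$ to the subcube center (this is your $T_n^{(k)}$, the paper's $C_1$, with $\MM$-bound of order $2^{-k}$), cone the residual atomic discrepancy at the centers to a common vertex (your $S_n$, the paper's $C_2$), and let $n\to\infty$ first for fixed $k$, then $k\to\infty$. The one cosmetic difference is that you bound $\MM(S_n)$ by $l\sqrt d\,(2^{kd})^{1-\alpha}\big(\sum_\ell|a_\ell^{(n)}|\big)^\alpha$ via Jensen and then send $\sum_\ell|a_\ell^{(n)}|\to 0$; the paper simply observes that $\sum_\ell|a_\ell^{(n)}|^\alpha\to 0$ directly, since for fixed $k$ it is a finite sum whose terms vanish by weak-$*$ convergence and the fact that the grid is null. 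The concavity detour is therefore unnecessary, and correspondingly the ``main obstacle'' you flag (the factor $(2^{kd})^{1-\alpha}$ blowing up in $k$) is not actually an obstacle once the two-parameter limit is taken in the right order — but your handling of it is nonetheless sound.
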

\begin{proof}
By Lemma \ref{lemmagriglia} we can assume that, up to enlarging the cube Q, 
\begin{equation}\label{griglia2}
\mu_n(\Sk(Q,k))=\nu_n(\Sk(Q,k))=0, \quad \mbox{ for all } (k,n)\in\N^2.
\end{equation}
Now fix $k\in\N$ and $\gamma>0$; let $\{Q^\ell\}_{\ell=1,\dots,2^{kd}}$ be the cubes in $\Lambda(Q,k)$.
Denote by $\sigma_n$ the real-valued measure
$$\sigma_n:=\sum_{\ell=1}^{2^{kd}}\theta_\ell\delta_{x_\ell}\quad \mbox{ where $x_\ell$ is the barycenter of $Q_\ell$ and } \theta_\ell:=\nu_n(Q^\ell)-\mu_n(Q^\ell).$$
By \eqref{griglia2}, the assumption $\mu_n - \nu_n\wto 0$ yields
\begin{equation}\label{massaalfacono}
\Mass^\alpha(\sigma_n) =\sum_{\ell=1}^{2^{kd}}|\nu_n(Q^\ell)-\mu_n(Q^\ell)|^\alpha\leq \gamma, \quad\mbox{ for $n$ sufficiently large}.
\end{equation}
For every $\ell=1,\dots,2^{kd}$, we consider the cone over $(\mu_n-\nu_n)\trace Q^\ell$ of vertex $x_\ell$ as in \eqref{defn:cone}
$$C^\ell:=x_\ell\cone \big((\mu_n-\nu_n)\trace Q^\ell\big).$$ 
%By lower semicontinuity of $\MM$, we have $\MM(\mu)\leq C$.
Its boundary is given by $(\mu_n-\nu_n)\trace Q^\ell + \sigma_n\trace Q^\ell$.
Denoting by $l$ the diameter of $Q$ and $C_1:=\sum_{\ell=1}^{2^{kd}}C^\ell$, we have
\begin{equation}\label{cono1}
\begin{split}
\MM(C_1)&\leq\sum_{\ell=1}^{2^{kd}}\MM(C^\ell)\overset{\eqref{stimecono}}{\leq} 2^{-k}l\sum_{\ell=1}^{2^{kd}}(\MM(\mu_n\trace Q^\ell)+\MM(\nu_n\trace Q^\ell))\\
&\overset{\eqref{griglia2}}{\leq} 2^{-k}l(\MM(\mu_n)+\MM(\nu_n))\leq  2^{-k+1}lC,
\end{split}
\end{equation}
and 
\begin{equation}\label{cono2}
\partial C_1=\mu_n-\nu_n + \sigma_n.
\end{equation}
Denote also $x$ the center of $Q$ and $C_2:= x\cone \sigma_n$. Again by \eqref{stimecono} and \eqref{massaalfacono}, since $\sum_{\ell=1}^{2^{kd}}\theta_\ell=0$ we have
\begin{equation}\label{cono3}
\partial C_2=\sigma_n \quad\mbox{ and }\quad \MM(C_2)\leq l\cdot \gamma.
\end{equation}
Combining \eqref{cono1}, \eqref{cono2}, and \eqref{cono3}, we deduce that
$$\partial(C_1-C_2)=\mu_n-\nu_n, \quad \mbox{ and } \quad \MM(C_1-C_2)\leq l(2^{-k+1}C+\gamma).$$
The conclusion follows from the arbitrariness of $k$ and $\gamma$.
\end{proof}

\subsection{Slicing}\label{s:slicing}
A fundamental tool for the proof of Theorem \ref{thm:main} is the notion of slicing of rectifiable 1-currents. Here we recall the definition and some fundamental properties. We refer the reader to \cite[Section 28]{SimonLN} for further details\footnote{As many classical references, \cite{SimonLN} considers only rectifiable currents with integer multiplicities. It is easy to check that every statement we refer to is valid also in the case of real multiplicities.}. 
\begin{definition}[(Slicing of 1-rectifiable currents)]
Let $T=[E,\tau,\theta] \in \mathbf{R}_1(\R^d)$ and let $f:\R^d \to \R$ be a Lipschitz function. For a.e. $t\in\R$ we define the slice of $T$ according to $f$ at $t$ to be the 0-rectifiable current
$$\langle T, f, t \rangle=[E_t,\tau_t,\theta_t],$$ 
where:
\begin{itemize}
\item $E_t=E\cap f^{-1}(t)$ and it is at most countable (hence 0-rectifiable) for a.e. $t$;
\item $\tau_t(x)=1$ if the scalar product $\nabla_Ef(x)\cdot\tau(x)$ is positive (where $\nabla_Ef$ denotes the tangential gradient); $\tau_t(x)=-1$ otherwise; 
\item $\theta_t=1_{E_t}\theta$.
\end{itemize}
\end{definition}

We will use the following characterization of the slices (see \cite[Lemma 28.5(2)]{SimonLN}). Let $T$ and $f$ as above. Then
\begin{equation}\label{e:def_slicing}
\langle T, f, t \rangle:= \partial (T \trace \{f < t\})-(\partial T)\trace \{f < t\},
\end{equation}
for a.e. $t \in (0,+\infty)$.

We conclude this short review with a simple consequence of the Coarea formula for rectifiable sets (see \cite[Lemma 28.5(1)]{SimonLN}). Let $T$ and $f$ as above, then
\begin{equation}\label{e:massa_slices}
\int_a^b\Mass (\langle T, f, t \rangle) dt\leq {\rm Lip}(f)\Mass (T\trace \{a<f<b\}).
\end{equation}

In the following, we choose $f:=d_x$, where $d_x(z):=\|z-x\|_\infty$.
\begin{lemma}[(Estimate of $\MM$ of suitable slices)]\label{slice}
	Let $x,y \in \R^d$, $r_0>0, \eta_0 \in (1,2)$, $\{T_n=[E_n,\tau_n,\theta_n]\}_{n\in \N} \subset \mathbf{R}_1(\R^d)$ with $\MM(T_n)\leq C.$
	Then there exists a set of positive measure $E \subseteq [r_0, \eta_0r_0]$
	 such that for every $r\in E$ there exist infinitely many $n\in \N$ satisfying
	\begin{equation}\label{eqn:slicing_control}
\MM(\langle T_n, d_{x}, r \rangle )+\MM(\langle T_n, d_{y}, r \rangle ) \leq 4\frac{ \MM(T_n)}{(\eta_0-1)r_0},\quad\mbox{ for $j=1,\dots,m$.}
	\end{equation}
\end{lemma}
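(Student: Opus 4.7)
The plan is to establish a coarea-type $L^1$ estimate for the $\alpha$-mass of the slices, apply Markov's inequality to produce for each $n$ a set of ``good'' radii of controlled measure inside $[r_0, \eta_0 r_0]$, and finally invoke reverse Fatou to extract a positive-measure set of radii that are good for infinitely many $n$.

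First I would upgrade the coarea inequality \eqref{e:massa_slices} to the $\alpha$-mass. Writing $T_n = [E_n, \tau_n, \theta_n]$ and applying the coarea formula for the $\Haus^1$-rectifiable set $E_n$ to the weighted integrand $|\theta_n|^\alpha$ gives, for any $1$-Lipschitz $f$,
$$\int_\R \MM(\langle T_n, f, r\rangle)\, dr \;=\; \int_{E_n} |\theta_n|^\alpha\, |\nabla_{E_n} f|\, d\Haus^1 \;\leq\; \Lip(f)\, \MM(T_n).$$
Since $d_z(\cdot) = \|\cdot-z\|_\infty$ is $1$-Lipschitz with respect to the Euclidean norm (as $\|\cdot\|_\infty \leq \|\cdot\|_2$), applying this to $f = d_x$ and $f = d_y$ and restricting to $[r_0, \eta_0 r_0]$ yields
$$\int_{r_0}^{\eta_0 r_0} \bigl[\MM(\langle T_n, d_x, r\rangle) + \MM(\langle T_n, d_y, r\rangle)\bigr]\, dr \;\leq\; 2\, \MM(T_n).$$

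Next, I would set $K_n := \sfrac{4\,\MM(T_n)}{(\eta_0-1)\, r_0}$ and
$$A_n \;:=\; \Bigl\{\,r \in [r_0, \eta_0 r_0]\,:\, \MM(\langle T_n, d_x, r\rangle) + \MM(\langle T_n, d_y, r\rangle) \leq K_n\,\Bigr\}.$$
Markov's inequality applied to the $L^1$ bound above gives $\Leb^1([r_0, \eta_0 r_0]\setminus A_n) \leq \sfrac{(\eta_0-1)r_0}{2}$, so that $\Leb^1(A_n) \geq \sfrac{(\eta_0-1)r_0}{2}$ for every $n$. Since all $A_n$ sit inside the finite-measure set $[r_0, \eta_0 r_0]$, reverse Fatou yields
$$\Leb^1\bigl(\limsup_n A_n\bigr) \;\geq\; \limsup_n \Leb^1(A_n) \;\geq\; \frac{(\eta_0-1)r_0}{2} \;>\; 0.$$
Taking $E := \limsup_n A_n$, by definition every $r \in E$ belongs to $A_n$ for infinitely many indices, which is exactly the conclusion \eqref{eqn:slicing_control}.

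The only step that requires a moment's care is the weighted coarea inequality, since \eqref{e:massa_slices} is stated only for the ordinary mass; but this is immediate from the standard coarea formula for rectifiable sets once one allows the nonnegative weight $|\theta_n|^\alpha$. The rest is a routine combination of Markov and the reverse Fatou lemma, so I expect no substantive obstacle.
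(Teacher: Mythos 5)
Your proof is correct and takes essentially the same approach as the paper's: both establish the $\alpha$-mass coarea bound (the paper does so by applying \eqref{e:massa_slices} to the auxiliary current $\tilde T_n=[E_n,\tau_n,\theta_n^\alpha]$, you apply the weighted coarea formula directly, which is the same computation), both apply Chebyshev/Markov to get $\Leb^1(A_n)\geq(\eta_0-1)r_0/2$, and both conclude by a reverse-Fatou argument identifying a positive-measure set of radii lying in infinitely many $A_n$. The only difference is cosmetic packaging of the coarea step.
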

\begin{proof}
    For every $n\in \N$ we define the set 
	$$F_n:=\Big\{r\in (r_0, \eta_0r_0): \MM(\langle T_n, d_{x}, r \rangle)+\MM(\langle T_n, d_{y}, r \rangle) \leq \frac{4 \MM(T_n)}{(\eta_0-1)r_0} \Big\}.
	$$	
	We apply Chebyshev inequality and \eqref{e:massa_slices} to the $1$-rectifiable current $\tilde T_n =[E_n,\tau_n,\theta_n^\alpha]$
	to obtain
	$$\Haus^1((r_0, \eta_0r_0)\setminus F_n)  \frac{4 \MM(T_n)}{(\eta_0-1)r_0}\leq  
	\int_{r_0}^{\eta_0r_0} \MM(\langle T_n, d_{x}, r \rangle)+\MM(\langle T_n, d_{y}, r \rangle) dr$$ 
	$$= \int_{r_0}^{\eta_0r_0} \Mass(\langle \tilde T_n, d_{x}, r \rangle)+\Mass(\langle \tilde T_n, d_{y}, r \rangle) dr \leq 2\Mass (\tilde T_n) = 2\MM(T_n).$$
	We deduce that $\Haus^1(F_n) \geq (\eta_0-1)r_0/2$. By Fatou's lemma
	$$\frac{(\eta_0-1)r_0}{2}\leq \limsup_{n\to \infty} \int_{r_0}^{\eta_0r_0} 1_{F_n}(r) \, dr \leq  \int_{r_0}^{\eta_0r_0} \limsup_{n\to \infty} 1_{F_n}(r) \, dr ,$$
	hence there exists a set of positive measure of radii where $ \limsup_{n\to \infty} 1_{F_n}(r) =1$. Any $r$ in this set satisfies \eqref{eqn:slicing_control} (for a possibly $r$-dependent family of indices $n$).
\end{proof}

\subsection{Improved lower semi-continuity} 
Given $\{x_1, \dots, x_N\}\in\R^d$ we consider a sequence of sets $\{G_k\}_{k\in\N}$ with the following property. For every $k$ there are closed disjoint cubes $Q^k_1,\dots,Q^k_N$ of diameters $\rho^k_1, \dots, \rho^k_N$ such that $\rho^k_j\to 0$ for every $j=1,\dots,N$, as $k\to 0$, $x_j\subset Q^k_j$ for $j=1,\dots,N$ and moreover $Q^k_j\supset Q^h_j$, for every $h>k$, for every $j=1,\dots,N$. Define 
\begin{equation}\label{e:gicappa}
G_k=\R^d\setminus\bigcup_{j=1}^NQ^k_j.
\end{equation}

\begin{lemma}\label{l:andrea}
Let $\{G_k\}_{k \in \N}$ be as in \eqref{e:gicappa} and let $\{T_n\}_{n \in \N} \subset \mathbf{R}_{1}(\R^d)$  and $T \in \mathbf{R}_{1}(\R^d)$ such that 
\begin{equation}\label{e:andrea1}
\lim_{n \to +\infty} \Flat (T_n- T) = 0.
\end{equation}
Then there exists a subsequence $\{T_{n_k}\}$ and a sequence of open sets $G'_k\subset G_k$ such that 
\begin{equation}\label{e:andrea2}
\lim_{k \to +\infty}\Flat (T_{n_k}\trace G'_k-T)=0.
\end{equation}
\end{lemma}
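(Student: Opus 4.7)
The plan is to exploit the flat-norm decomposition of $T_n-T$ together with a slicing argument so as to restrict each $T_n$ to a set obtained from $G_k$ by enlarging the excluded cubes $Q^k_j$ just enough to absorb the unavoidable boundary contribution. By \eqref{e:andrea1}, for every $n$ I fix a decomposition $T_n-T=R_n+\partial S_n$ with $R_n\in\D_1(\R^d)$, $S_n\in\D_2(\R^d)$ and $\Mass(R_n)+\Mass(S_n)\to 0$; since $\partial S_n=T_n-T-R_n$ has finite mass, $S_n$ is a normal $2$-current. For each $k$ and $j\in\{1,\dots,N\}$, I introduce the $1$-Lipschitz function $f_{k,j}(x):=\dist(x,Q^k_j)$ and the closed enlargement $\tilde Q^k_j(r):=\{f_{k,j}\leq r\}$, which contains $Q^k_j$ and has diameter at most $\rho^k_j+2r$.

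Next I select the subsequence. Pick $n_k$ strictly increasing and so large that $\Mass(R_{n_k})+\Mass(S_{n_k})\leq (\min_j\rho^k_j)/k^2$. The coarea inequality for the normal $2$-current $S_{n_k}$ and $f_{k,j}$,
$$\int_0^{\rho^k_j/2}\Mass\big(\langle S_{n_k},f_{k,j},r\rangle\big)\,dr\leq \Mass(S_{n_k}),$$
combined with Chebyshev, produces a radius $r^k_j\in(0,\rho^k_j/2)$ with $\Mass(\langle S_{n_k},f_{k,j},r^k_j\rangle)\leq 4/k^2$. A preliminary application of Lemma \ref{lemmagriglia} to the countable family $\{|T_n|\}_n\cup\{|T|\}$ further allows me to pick $r^k_j$ so that the level set $\{f_{k,j}=r^k_j\}$ is negligible for all these measures (and, up to an analogous restriction, also for $|S_{n_k}|$ and $|R_{n_k}|$, so that the slicing identity holds at $r^k_j$). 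Setting $\tilde Q^k_j:=\tilde Q^k_j(r^k_j)$, for $k$ large these sets are pairwise disjoint and shrink to the distinct points $x_1,\dots,x_N$, and the open set $G'_k:=\R^d\setminus\bigcup_j\tilde Q^k_j$ satisfies $G'_k\subset G_k$.

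Finally, I split
$$T_{n_k}\trace G'_k - T = (T_{n_k}-T)\trace G'_k - T\trace\bigcup_j\tilde Q^k_j.$$
The second summand tends to zero in mass because $|T|$ is absolutely continuous with respect to $\Haus^1$ and $\bigcup_j\tilde Q^k_j$ shrinks to a finite set, so carries vanishing $|T|$-mass. For the first, applying \eqref{e:def_slicing} to $S_{n_k}$ and each $f_{k,j}$ on $\{f_{k,j}<r^k_j\}$ and summing gives
$$(T_{n_k}-T)\trace G'_k = R_{n_k}\trace G'_k + \partial\big(S_{n_k}\trace G'_k\big) + \sum_{j=1}^N \langle S_{n_k},f_{k,j},r^k_j\rangle,$$
whose flat norm is bounded by $\Mass(R_{n_k})+\Mass(S_{n_k})+4N/k^2\to 0$. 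I expect the main technical hurdle to be the rigorous use of the slicing identity for the $2$-current $S_{n_k}$ at a concrete radius $r^k_j$, which requires the level set to be negligible for the finitely many relevant currents; this is exactly what the generic choice of $r^k_j$ via Lemma \ref{lemmagriglia} is designed to ensure. The remaining bookkeeping (disjointness of the enlarged cubes for large $k$, containment $\tilde Q^k_j\supset Q^k_j$, extraction of an increasing subsequence) is routine.
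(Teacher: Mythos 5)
Your proof is correct and follows essentially the same strategy as the paper: take a flat-norm decomposition $T_n-T=R_n+\partial S_n$, select radii $r^k_j$ by coarea and Chebyshev so that the slices of $S_{n_k}$ along $\{\dist(\cdot,Q^k_j)=r^k_j\}$ have small mass, set $G'_k$ by deleting the enlarged cubes, and then express the restricted error via $R_{n_k}$, $\partial(S_{n_k}\trace\cdot)$, and the slices. One small imprecision: Lemma \ref{lemmagriglia} is about translating a grid so its skeleton is negligible for a countable family of measures, and it does not directly give generic radii for the level sets of $\dist(\cdot,Q^k_j)$; what you actually need is just that a finite Radon measure charges at most countably many level sets of a fixed Lipschitz function (and the slicing identity for $S_{n_k}$ already holds for a.e.\ $r$), a point the paper uses tacitly without invoking that lemma.
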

\begin{proof}
For every $n\in\N$, let $\varepsilon_n:=\Flat(T_n-T)$. By assumption $\varepsilon_n\to 0$ as $n\to +\infty$. For every $k\in \N$, let $\rho_k>0$ be such that $\rho_k\to 0$ as $k\to\infty$ and ${\rm dist}(Q^k_i,Q^k_j)\geq 2\rho_k$, for every $1\leq i< j\leq N$. By definition of flat distance, for every $n\in\N$ there exist $R_n,S_n$ such that $T_n-T=R_n+\partial S_n$ and $\Mass (R_n)+\Mass (S_n)\leq 2\varepsilon_n$. Choose $n_k$ such that $2\varepsilon_{n_k}\leq \rho_k^2$. By \eqref{e:massa_slices}, for every $k$ and for every $j=1,\dots,N$, there exists $0<r^k_j<\rho_k$ such that, denoting $d^k_j(x):={\rm dist}(x, Q^k_j)$, we have
\begin{equation}\label{e:slic1}
\sum_{j=1}^N\Mass(\langle S_{n_k}, d^k_j, r^k_j\rangle)\leq \rho_k^{-1}\varepsilon_{n_k}\leq\rho_k.
\end{equation}
Denote $G'_k:=\R^d\setminus \cup_{j=1}^N \overline{B(Q^k_j, r^k_j)}$. Obviously $G'_k\subset G_k$ for every $k$. 
Moreover, since $$T_{n_k}\trace G'_k-T=T_{n_k}-T - T_{n_k}\trace (\R^d\setminus G'_k),$$
then in order to prove \eqref{e:andrea2} it is sufficient to prove that
$$\lim_{k\to\infty}\Flat(T_{n_k}\trace (\R^d\setminus G'_k))=0.$$
Observe firstly that $\Mass(T\trace (\R^d\setminus G'_k))\leq \Mass(T\trace (\cup_{j=1}^N \overline{B(Q^k_j, \rho_k)}))\to 0$, as $k\to\infty$ because $\cup_{j=1}^N \overline{B(Q^k_j, \rho_k)}$ monotonically converges to the complementary of the $\Haus^1$-null set $\{x_1, \dots, x_N\}$, hence 
$$\lim_{k\to\infty}\Flat(T\trace (\R^d\setminus G'_k))=0.$$
Therefore, it suffices to show that
$$\lim_{k\to\infty}\Flat((T_{n_k}-T)\trace (\R^d\setminus G'_k))=0.$$
We can write, denoting $\langle S_{n_k},\partial G'_k\rangle:=\sum_{j=1}^N\langle S_{n_k}, d^k_j, r^k_j\rangle$,
\begin{equation}
\begin{split}
(T_{n_k}-T)\trace (\R^d\setminus G'_k)&= R_{n_k}\trace (\R^d\setminus G'_k)+\partial S_{n_k}\trace (\R^d\setminus G'_k)\\
&\stackrel{\eqref{e:def_slicing}}{=} R_{n_k}\trace (\R^d\setminus G'_k)+\partial (S_{n_k}\trace (\R^d\setminus G'_k))+\langle S_{n_k},\partial G'_k\rangle.
\end{split}
\end{equation}
Hence, denoting $R'_k:=R_{n_k}\trace (\R^d\setminus G'_k)+\langle S_{n_k},\partial G'_k\rangle$ and $S'_k:=S_{n_k}\trace (\R^d\setminus G'_k)$, we have $(T_{n_k}-T)\trace (\R^d\setminus G'_k)=R'_k+\partial S'_k$, and, by \eqref{e:andrea1}, $\Mass(R'_k)+\Mass(S'_k)\leq\rho_k+2\varepsilon_{n_k}$,  which tends to 0 as $k\to\infty$.
\end{proof}

We improve \cite[Lemma 4.10]{CDRM1} as follows:
\begin{lemma}[(Semi-continuity with lower bound on the density)]\label{second}
Let $T \in \mathbf{R}_{1}(\R^d)$. For every $\Delta>0$, there exists $\delta_{T,\Delta}>0$ satisfying the following property.
Let $\{G_k\}_{k \in \N}$ be as in \eqref{e:gicappa} and let $\{T_n\}_{n \in \N} \subset \mathbf{R}_{1}(\R^d)$ such that $T_n=[E_n,\tau_n,\theta_n]$ and 
\begin{equation}\label{ass1}
\mathbb M^\alpha(T_n) + \MM(T) \leq C, \quad \text{and} \quad \lim_{n \to +\infty} \Flat (T_n- T) = 0.
\end{equation}
%{\color{red}Let $\{n(k): k \in \N\}$ be a prescribed sequence.
%Then for every $k \in \N$ there exists $\bar{k} \geq k$ and $\bar{n} \geq n(\bar{k})$ such that }
%Then for every $k \in \N$ there exists $\bar{k} \geq k$ such that for infinitely many $\bar n$ TORNA?
%{\color{blue}
Then there exists $\bar k \in \N$ such that for any $k \geq \bar k$ and for infinitely many $n$ (possibly depending on $k$)
\begin{equation}\label{concl}
\mathbb{M}^\alpha\Big (T_{n}\trace \Big (G_{k} \cap \Big \{|\theta_{n}|>\Big(\frac{\delta_{T,\Delta}}{2C}\Big)^\frac{1}{1-\alpha}\Big\}\Big)\Big ) \geq  \mathbb{M}^\alpha(T)-\Delta.
\end{equation}
\end{lemma}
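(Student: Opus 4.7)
The approach is a contradiction argument combined with a diagonal construction that drives $\delta\to 0$. Suppose the claim is false: there exists $\Delta>0$ such that no threshold $\delta>0$ works. In particular, for each $j\in\N$ and $\delta_j:=1/j$, setting $\eta_j:=(\delta_j/(2C))^{1/(1-\alpha)}$, one can find a sequence $\{T_n^{(j)}\}_n$ verifying \eqref{ass1} and, for every $\bar k$, indices $k_j\geq \bar k$ and $N_j$ with
$$
\MM\bigl(T_n^{(j)}\trace(G_{k_j}\cap\{|\theta_n^{(j)}|>\eta_j\})\bigr)<\MM(T)-\Delta\qquad\text{for all }n\geq N_j.
$$
Choosing $\bar k=j$ and then $n_j\geq N_j$ large enough that $\Flat(T_{n_j}^{(j)}-T)\leq 1/j$, and writing $S_j:=T_{n_j}^{(j)}$, one obtains a single sequence $S_j\to T$ in flat norm with $\MM(S_j)\leq C$, $k_j\to\infty$, and $\eta_j\to 0$.

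The algebraic heart is a H\"older interpolation with exponents $1/(1-\alpha)$ and $1/\alpha$, which makes $\eta_j$ the correct threshold:
$$
\Mass\bigl(S_j\trace\{|\theta_{S_j}|\leq\eta_j\}\bigr)=\int_{|\theta_{S_j}|\leq\eta_j}|\theta_{S_j}|\,d\Haus^1\leq \eta_j^{1-\alpha}\,\MM(S_j)\leq \eta_j^{1-\alpha}C=\delta_j/2.
$$
Since $\delta_j\to 0$, writing $S_j^{>}:=S_j\trace\{|\theta_{S_j}|>\eta_j\}$, the current $S_j-S_j^{>}$ vanishes in mass and hence in flat norm, so $S_j^{>}\to T$ in flat norm.

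Next I apply Lemma \ref{l:andrea} to $\{S_j^{>}\}$ and the nested hole system $\{G_k\}$ (observe $G_h\supset G_k$ whenever $h>k$, since the cubes satisfy $Q_j^h\subset Q_j^k$) to extract a subsequence $\{S_{j_k}^{>}\}_k$ and open sets $G'_k\subset G_k$ with $\Flat(S_{j_k}^{>}\trace G'_k-T)\to 0$. Lower semi-continuity of the $\alpha$-mass under flat convergence of rectifiable currents with uniformly bounded $\alpha$-mass (White's theorem, \cite{White1999}) then gives
$$
\liminf_{k\to\infty}\MM(S_{j_k}^{>}\trace G'_k)\geq \MM(T).
$$
On the other hand, since $j_k\geq k$ forces $k_{j_k}\geq j_k\geq k$, the nesting yields $G'_k\subset G_k\subset G_{k_{j_k}}$, and monotonicity of the $\alpha$-mass under set restriction gives
$$
\MM(S_{j_k}^{>}\trace G'_k)\leq \MM(S_{j_k}^{>}\trace G_{k_{j_k}})<\MM(T)-\Delta,
$$
contradicting the previous liminf.

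The main obstacle is precisely the need to let $\delta$ vary: for a \emph{fixed} $\delta$ the low-multiplicity piece has only bounded (not vanishing) mass, so the restriction $S^{>}\trace G'_k$ is only approximately flat-close to $T$ and no direct semi-continuity argument applies. The diagonal construction with $\delta_j\to 0$ circumvents this, and the nested structure of $\{G_k\}$ is exactly what glues together the two indices $k$ (coming from Lemma \ref{l:andrea}) and $k_{j_k}$ (coming from the hypothesized counterexample) through a clean set inclusion.
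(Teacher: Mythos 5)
Your argument takes a genuinely different route from the paper, and while the ingredients are sound, there is a real gap in the framing of the contradiction.

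The paper does \emph{not} diagonalize over $\delta$. It fixes $\delta_{T,\Delta}$ once and for all at the outset, using the $\epsilon$-$\delta$ (``topological'') form of lower semi-continuity at the fixed current $T$: choose $\delta_{T,\Delta}$ so that $\Flat(T-T')\le\delta_{T,\Delta}$ implies $\MM(T)\le\MM(T')+\Delta/2$. Then, with $\e=(\delta_{T,\Delta}/(2C))^{1/(1-\alpha)}$, the bound $\mathbb M\bigl(T_{n_i}\trace(G'_{k_i}\cap\{|\theta_{n_i}|\le\e\})\bigr)\le C\e^{1-\alpha}=\delta_{T,\Delta}/2$ combines with Lemma \ref{l:andrea} (which contributes another $\delta_{T,\Delta}/2$) to land \emph{exactly} inside the tolerance of the $\epsilon$-$\delta$ estimate. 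So the ``main obstacle'' you identify — that for fixed $\delta$ the low-multiplicity piece has bounded rather than vanishing mass, so no semi-continuity applies — is not actually an obstacle: the paper applies semi-continuity not as a $\liminf$ inequality along a sequence, but as the equivalent statement that $\MM$ is lower semi-continuous at $T$ in the flat-norm metric, which tolerates a \emph{fixed} small flat error. Your diagonal construction is thus solving a difficulty that isn't there, and in doing so it introduces a real one.

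The gap: in the statement of Lemma \ref{second}, $\delta_{T,\Delta}$ is chosen \emph{before} the sequences $\{G_k\}$ and $\{T_n\}$ are quantified over. The negation you must contradict is therefore $\exists\Delta,\ \forall\delta>0,\ \exists\{G_k\},\ \exists\{T_n\},\ \dots$ — that is, for each $\delta_j=1/j$ the offending hole system $\{G_k^{(j)}\}$ may be \emph{different}. Your proof fixes a single nested system $\{G_k\}$ throughout the diagonal: you apply Lemma \ref{l:andrea} to ``the nested hole system $\{G_k\}$'' and then use the inclusion $G'_k\subset G_k\subset G_{k_{j_k}}$, which presupposes that all the $G$'s come from one common family. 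If $\{G_k^{(j)}\}$ varies with $j$, the cubes are centered around different finite sets of points for different $j$, there is no single nested family, and the inclusion chain collapses. What you actually prove is: for every fixed $\{G_k\}$ there is $\delta=\delta(T,\Delta,\{G_k\})$ that works. This is strictly weaker than the lemma, and the weaker version does \emph{not} suffice for the application in the main theorem: in Step 10 of the paper, the parameter $\delta_{\Delta/4}$ is used to define $\e$ in \eqref{eps}, which in turn determines $N$ and the points $x_1,\dots,x_N$ in \eqref{eqn:pocamassaneicubi1} and hence the very family $\{G_k\}$ in \eqref{defB}. A $\delta$ that depended on $\{G_k\}$ would make that construction circular.

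The remaining steps of your argument — the pointwise estimate $\int_{\{|\theta_{S_j}|\le\eta_j\}}|\theta_{S_j}|\,d\Haus^1\le\eta_j^{1-\alpha}\MM(S_j)$, the observation that truncating below $\eta_j$ kills mass and hence flat distance, the appeal to Lemma \ref{l:andrea}, and the monotonicity of $\MM$ under set restriction — are all correct and mirror computations the paper performs (compare your H\"older bound with \eqref{eqn:mass-to-0}). If you replace the diagonal construction by the paper's fixed choice of $\delta_{T,\Delta}$ via the $\epsilon$-$\delta$ form of lower semi-continuity, the quantifier problem disappears, the argument becomes a direct contradiction with the \emph{given} $\{G_k\},\{T_n\}$, and it matches the paper's proof.
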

\begin{proof}
Given $\Delta>0$, let $\delta_{T,\Delta}>0$ be such that, by the lower semi-continuity of $\mathbb{M}^\alpha$ with respect to the flat convergence (as stated in \cite[Proposition 2.5]{flat-relax}),
\begin{equation}\label{semic}
\Flat (T- T') \leq \delta_{T,\Delta} \quad \Rightarrow \quad \mathbb{M}^\alpha(T) \leq  \mathbb{M}^\alpha(T')+\frac{\Delta}{2}.
\end{equation}
Let us denote $\e=(\sfrac{\delta_{T,\Delta}}{2C})^\frac{1}{1-\alpha}$. By contradiction, there exist increasing sequences $k_i$ and $m_i$ such that
% there exist $k_0 \in \N$ such that for every $i\in \N$ and $ n \geq n(k)$ we have 
 \begin{equation}\label{concl1}
\mathbb{M}^\alpha(T_{n}\trace (G_{k_i} \cap \{|\theta_{n}|> \e\})) <  \mathbb{M}^\alpha(T)-\Delta, \quad \forall n\geq m_i, \, \, \forall i \in \N.
\end{equation}
By Lemma \ref{l:andrea}, there exists a subsequence $\{T_{n_i}\}_{i \in \N}\subset \{T_{m_i}\}_{i \in \N}$ and a sequence of open sets $G'_{k_i}\subset G_{k_i}$ such that
\begin{equation}\label{ass2}
\Flat (T_{n_i}\trace G_{k_i}' - T) \leq \frac{\delta_{T,\Delta}}{2}, \qquad \forall i \in \N.
\end{equation}
Moreover, since $m_i$ is an increasing sequence, we deduce that $n_i \geq m_i$.

 %DAL LEMMA COME OTTENGO LA PRIMA? 
 By \eqref{ass1} it holds
	\begin{equation}
	\label{eqn:mass-to-0}
	\mathbb M(T_{n_i}\trace (G'_{k_i}\cap \{|\theta_{n_i}|\leq \e \}))<\e^{1-\alpha}\mathbb M^\alpha(T_{n_i}\trace (G'_{k_i} \cap \{|\theta_{n_i}|\leq \e\}))<C\e^{1-\alpha}.
	\end{equation}
	Hence, by \eqref{ass2} and \eqref{eqn:mass-to-0}, we compute
	\begin{equation}\label{contr}
	\begin{split}
	\mathbb{F}(T-T_{n_i}\trace (G'_{k_i} \cap \{|\theta_{n_i}|> \e\}))
	&\leq\mathbb{F}(T-T_{n_i}\trace G'_{k_i})+\mathbb{F}(T_{n_i}\trace G'_{k_i} - T_{n_i}\trace (G'_{k_i} \cap \{|\theta_{n_i}|> \e\}))\\
	& =\mathbb{F}(T-T_{n_i}\trace G'_{k_i})+\mathbb{F}(T_{n_i}\trace (G'_{k_i} \cap \{|\theta_{n_i}|\leq \e\})) 
	\\ & \overset{\eqref{ass2}}{\leq} \frac{\delta_{T,\Delta}}{2}+ \mathbb M(T_{n_i}\trace (G'_{k_i} \cap \{|\theta_{n_i}|\leq \e\}))\overset{\eqref{eqn:mass-to-0}}{\leq} \frac{\delta_{T,\Delta}}{2}+ C \e^{1-\alpha} \\
	&\leq \frac{\delta_{T,\Delta}}{2}+ \frac{\delta_{T,\Delta}}{2}=\delta_{T,\Delta}.
	\end{split}
	\end{equation}
	Combining \eqref{contr}, \eqref{concl1} and \eqref{semic}, for every $i \in \N$, we deduce the desired contradiction
	$$\mathbb M^\alpha(T) \overset{\eqref{semic}}{\leq} \mathbb M^\alpha(T_{n_i}\trace (G'_{k_i} \cap \{|\theta_{n_i}|> \e\})) + \frac{\Delta}{2}\leq \mathbb M^\alpha(T_{n_i}\trace (G_{k_i} \cap \{|\theta_{n_i}|> \e\})) + \frac{\Delta}{2} \overset{\eqref{concl1}}{<} \mathbb M^\alpha(T) - \frac{\Delta}{2}.$$ 
\end{proof}
\subsection{Currents with coefficients in $\R^M$}\label{s:g-currents}
A technical difficulty in the proof of Theorem \ref{thm:main} comes from the fact that the limit of a sequence of good decompositions (as in Definition \ref{defn:GD}) is not necessarily a good decomposition. More precisely, we need a lower semi-continuity type result, which 
heuristically keeps track in the limit of those Lagrangian trajectories which have opposite orientations and therefore they would cancel as classical currents. To this aim we require notions from the theory of currents with coefficients in groups. In particular we work in the normed group $G:=(\R^M, \|\cdot \|_{1})$ and we obtain in Lemma \ref{lemmasem} a stronger statement with respect to the usual lower semi-continuity of the $\alpha$-mass. 

For the purposes of this paper it is sufficient to regard a current $T$ on $\R^d$ with coefficients in $\R^M$ as an ordered $M$-tuple of classical currents on $\R^d$ (i.e. with real coefficients), henceforth called the \emph{components} of $T$, and denoted $T^1,\dots, T^M$.
In particular one can represent a rectifiable 1-current $T$ on $\R^d$ with coefficients in $\R^M$ as a triple $[E,\tau,\Theta]$, where $E$ is a 1-rectifiable set on $\R^d$, $\tau$ is an orientation of $E$ and $\Theta=(\theta_1,\dots,\theta_M):E\to\R^M$, with $\Theta\in L^1(\Haus^1\trace E;\R^M)$. The components of $T$ are the classical 1-rectifiable currents $T^j:=[E,\tau,\theta_j]$, for $j=1,\dots,M$. The space of 1-rectifiable currents on $\R^d$ with coefficients in $\R^M$ is denoted ${\bf R}_1^{\R^M}(\R^d)$. 
We refer the reader to \cite[Section 4]{MMST} for a more rigorous introduction. 

For every $\alpha\in (0,1)$ and for $T=[E,\tau,\Theta]\in{\bf R}_1^{\R^M}(\R^d)$ we define the quantity 
$$\Mass_{\R^M}^\alpha(T):=\int_{E}\|\Theta\|_{1}^\alpha d\Haus^1.$$
By \cite[Section 6]{White1999} this quantity is lower semi-continuous with respect to the standard notion of convergence in flat norm for currents with coefficients in groups, which by \cite[Section 4.6]{MMST} is equivalent to the joint convergence in flat norm of all components.
%\begin{lemma}\label{convG}
%Let $G=(\R^M,\|\cdot\|_1)$ and let $\{S_n\}_{n\in\N}\subset {\bf R}_1(X,G)$. Let $\{T^\ell_n\}_{n\in\N}\subset {\bf R}_1(X)$ ($\ell=1,\dots,M$) be the components of $S_n$ and assume that for every $\ell=1,\dots,M$, there exists $T^\ell\in{\bf R}_1(X)$ such that $\Flat(T^\ell_n-T^\ell)\to 0$. Then, denoting $S\in{\bf R}_1(X,G)$ the current with components $(T^1,\dots,T^M)$, it holds
%$$\Flat_G(S_n-S)\to 0.$$
%\end{lemma}
%\begin{proof}
%It is sufficient to prove the following statement: given a polyhedral chain $P$ with coefficients in $G$, denoting $(P^1,\dots,P^M)$ its components, it holds
%$$\Flat_G(P)\leq \sum_{\ell=1}^M\Flat(P^\ell).$$
%Indeed, letting $R^\ell\in{\bf P}_1(X), Q^\ell\in{\bf P}_2(X)$ be such that
%$$P^\ell=R^\ell+\partial Q^\ell\quad\mbox{ and }\quad \Flat (P^\ell)=\Mass(R^\ell)+\Mass(Q^\ell),$$ 
%and denoting $R\in{\bf P}_1(X), Q\in{\bf P}_2(X)$ the polyhedral $G$-chains whose components are, respectively, $(R^1,\dots,R^M)$ and $(Q^1,\dots,Q^M)$, it holds $P=R+\partial Q$, hence
%$$\Flat_G(P)\leq \Mass(R)+\Mass(Q)\leq \sum_{\ell=1}^{M}\Flat (P^\ell).\qedhere$$
%\end{proof}

% on open sets and  a strengthened lower semicontinuity result, under the additional assumption that each element of the sequence $T_n$ can be decomposed in a given number of components without cancellations. To avoid the cancellations losses in the limit, in the proof we see the components of $T_n$ as a current with coefficients . To this aim we will use the lower semicontinuity in this setting. 

\begin{lemma}[(Lower semi-continuity without cancellations)]\label{lemmasem}
For every $n \in \N$, let $\{T_n^\ell\}_{\ell=1}^M$, $\{T^\ell\}_{\ell=1}^M \subset \mathbf{R}_1(\R^d)$ with $T_n^\ell=[E_{n,\ell}, \tau_{n,\ell}, \theta_{n,\ell}]$ and $T^\ell=[E_\ell, \tau_\ell, \theta_\ell]$. We assume that
\begin{equation}\label{gruppo0}
\lim_{n \to +\infty}\Flat(T_n^\ell-T^\ell)= 0, \quad \forall \ell=1, \dots, M,
\end{equation}
and
\begin{equation}\label{gruppo1}
\Mass(T_n)=\sum_{\ell=1}^M\Mass(T_n^\ell), \quad \mbox{ where } \quad T_n:=\sum_{\ell=1}^MT_n^\ell.
\end{equation}
We denote $E=\cup_{\ell=1}^ME_\ell$ and $\theta: x \in E \mapsto \sum_{\ell=1}^M|\theta_\ell(x)|$. Then
\begin{equation}\label{gruppo2}
\int_E\theta^\alpha d\Haus^1 \leq \liminf_{n \to \infty}\MM(T_n).
\end{equation}
\end{lemma}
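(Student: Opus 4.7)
The plan is to encode the tuple $(T_n^1,\dots,T_n^M)$ as a single rectifiable current $T_n^{\R^M}:=(T_n^1,\dots,T_n^M)\in\mathbf{R}_1^{\R^M}(\R^d)$ and analogously $T^{\R^M}:=(T^1,\dots,T^M)$, then apply the lower semi-continuity of $\Mass_{\R^M}^\alpha$ with respect to the flat norm for currents with coefficients in $(\R^M,\|\cdot\|_{1})$ recalled in \S\ref{s:g-currents}. Hypothesis \eqref{gruppo0} together with the characterization in \cite[Section 4.6]{MMST} of the $\R^M$-flat convergence as joint componentwise flat convergence gives that $T_n^{\R^M}\to T^{\R^M}$ in the flat norm of $\R^M$-currents, and therefore
$$\Mass_{\R^M}^\alpha(T^{\R^M})\le\liminf_{n\to\infty}\Mass_{\R^M}^\alpha(T_n^{\R^M}).$$

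Next I would translate each side back to the classical objects. For the limit, I would place all the components on the common rectifiable set $E=\bigcup_\ell E_\ell$ oriented by a single Borel unit $1$-vector field $\tau$, so that $T^\ell=[E,\tau,\tilde\theta_\ell]$ with $|\tilde\theta_\ell|=|\theta_\ell|\mathbf{1}_{E_\ell}$. At $\Haus^1$-a.e. $x\in E$, the $\ell^1$-norm of the coefficient vector of $T^{\R^M}$ equals $\sum_\ell|\tilde\theta_\ell(x)|=\theta(x)$, whence $\Mass_{\R^M}^\alpha(T^{\R^M})=\int_E\theta^\alpha\,d\Haus^1$. In the same fashion I would place all $T_n^\ell$ on $E_n:=\bigcup_\ell E_{n,\ell}$ with a common orientation $\tau_n$, writing $T_n^\ell=[E_n,\tau_n,\tilde\theta_{n,\ell}]$ with $|\tilde\theta_{n,\ell}|=|\theta_{n,\ell}|\mathbf{1}_{E_{n,\ell}}$.

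The decisive step is extracting from \eqref{gruppo1} an absence of cancellations, which then identifies the classical $\alpha$-mass of $T_n$ with the group $\alpha$-mass of $T_n^{\R^M}$. Indeed \eqref{gruppo1} reads
$$\int_{E_n}\Bigl|\sum_{\ell=1}^M\tilde\theta_{n,\ell}\Bigr|\,d\Haus^1=\Mass(T_n)=\sum_{\ell=1}^M\Mass(T_n^\ell)=\int_{E_n}\sum_{\ell=1}^M|\tilde\theta_{n,\ell}|\,d\Haus^1,$$
and combined with the pointwise triangle inequality this forces the non-zero reoriented densities $\tilde\theta_{n,\ell}(x)$ to share the same sign for $\Haus^1$-a.e. $x\in E_n$. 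Consequently
$$\MM(T_n)=\int_{E_n}\Bigl|\sum_\ell\tilde\theta_{n,\ell}\Bigr|^\alpha d\Haus^1=\int_{E_n}\Bigl(\sum_\ell|\tilde\theta_{n,\ell}|\Bigr)^\alpha d\Haus^1=\Mass_{\R^M}^\alpha(T_n^{\R^M}),$$
and plugging this together with the formula for $\Mass_{\R^M}^\alpha(T^{\R^M})$ into the lower semi-continuity inequality yields \eqref{gruppo2}. I expect the only real obstacle to be this no-cancellation observation; everything else is a bookkeeping reduction to the group-valued framework where the lower semi-continuity is already available off the shelf.
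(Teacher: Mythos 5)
Your proposal is correct and follows essentially the same route as the paper: encode the tuple $(T_n^1,\dots,T_n^M)$ as a current in $\mathbf{R}_1^{\R^M}(\R^d)$, observe that \eqref{gruppo1} forces the reoriented densities to share a common sign $\Haus^1$-a.e.\ so that $\MM(T_n)$ coincides with the group $\alpha$-mass of that current, and then invoke the lower semi-continuity of $\Mass^\alpha_{\R^M}$ under componentwise flat convergence from White. Your spelled-out triangle-inequality derivation of the no-cancellation step is just a more explicit version of what the paper records in a single line when it writes $T_n=[E_n,\tau_n,\theta_n]$ with $\theta_n=\sum_\ell|\theta_{n,\ell}|$.
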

\begin{proof}
We first observe that by \eqref{gruppo1}, for every $n \in \N$, there exists a unitary vector field $\tau_n$ on $E_n:=\cup_{\ell=1}^ME_{n,\ell}$ such that 
\begin{equation}\label{gruppo3}
T_n=[E_{n}, \tau_{n}, \theta_{n}],\quad \mbox{ where $\theta_n:=\sum_{\ell=1}^M|\theta_{n,\ell}|$.}
\end{equation}

For every $\ell=1,\dots, M$, we can associate to the classical current $T^\ell_n$ the current $S^\ell_n=[E_{n,\ell}, \tau_{n,\ell}, \theta_{n,\ell}{\bf{e}}_\ell]\in {\bf R}_1^{\R^M}(\R^d)$. Analogously we associate to the current $T^\ell$ the currents $S^\ell=[E_{\ell}, \tau_{\ell}, \theta_{\ell}{\bf{e}}_\ell]$. We define $S_n:=\sum_{\ell=1}^MS^\ell_n$ and $S:=\sum_{\ell=1}^MS^\ell$. In other words $S_n$ is the current with coefficients in $\R^M$ whose components are $T_n^1,\dots, T_n^M$, while $S$ has components $T^1,\dots, T^M$.
By \eqref{gruppo1}, we can compute
\begin{equation}\label{gruppo5}
\MM_{\R^M}(S_n)=\int_{E_n}\bigg(\sum_{\ell=1}^M|\theta_{n,\ell}|\bigg)^\alpha d \Haus^1\overset{\eqref{gruppo3}}{=}\MM(T_n).
\end{equation}
By the lower semi-continuity of $\MM_{\R^M}$, (see \cite[Section 6]{White1999}), we deduce that
$$\int_E\theta^\alpha d\Haus^1 = \int_E\|(\theta_1,\dots,\theta_M)\|_1^\alpha d\Haus^1=\MM_{\R^M}(S)\stackrel{\eqref{gruppo0}}{\leq} \liminf_{n \to \infty}\MM_{\R^M}(S_n)\overset{\eqref{gruppo5}}{=}\liminf_{n \to \infty}\MM(T_n).\qedhere$$
\end{proof}

\section{Proof of Theorem \ref{thm:main}}\label{pro}
Up to a simple scaling argument (detailed at the beginning of the proof of \cite[Theorem 1.2]{CDRM1}), we can assume without loss of generality that $\Mass(\mu_n^\pm)=\Mass(\mu^\pm)=1$. By contradiction, we assume that there exists a (non-relabelled) subsequence $\{T_n\}_{n \in \N}$ and a traffic path $T \in \TP(\mu^-,\mu^+)$ such that $\Flat (T_n -T) \to 0$ and $T$ is not optimal. We consider $T_{opt}\in \OTP(\mu^-,\mu^+)$ and denote 
\begin{equation}\label{gap}
\Delta:=\MM(T)-\MM(T_{opt})>0.
\end{equation}
Let $\delta_{\Delta/4}>0$ be defined as in Lemma \ref{second} with respect to $\Delta/4$ and $T$, denote 
\begin{equation}\label{e:definCmod}
C:=\sup_{n\in \N}\MM(T_n)
\end{equation}
and fix
\begin{equation}\label{eps}
\e:=\min\left\{\frac{\Delta}{16},\left(\frac{\Delta}{8C}\right)^\frac{2}{\alpha}, \left(\frac{\delta_{\Delta/4}}{2C}\right)^\frac{2}{1-\alpha} \right\},
\end{equation}

\bigskip
{\it Step 1: Partitioning Smirnov curves of $T_n$ according to their initial and final points.} 

Since $T_n$ are optimal traffic paths, by Theorem~\ref{t:propr_good_dec} we can find for every $n \in \N$ a good decomposition (see Definition~\ref{defn:GD})
$$T_n=\int_{\Lip}I_\gamma dP_n(\gamma), \quad \mbox{ with } P_n \in \Prob(\Lip), \mbox{ supported on curves parametrized by arc length}.$$

Applying Lemma \ref{lemmagriglia}, we can find a cube $Q$ containing $X$, such that 
\begin{equation}
\label{eqn:grid-no-meas}
\mu^\pm(\Sk(Q,k))=\mu^\pm_n(\Sk(Q,k))=0, \quad \mbox{ for all } (k,n)\in\N^2.
\end{equation}
Without loss of generality we will assume that the edge length of $Q$ is 2, so that for every $Q^i\in\Lambda(Q,k)$ the distance between the center of $Q^i$ and $\partial Q^i$ is $2^{-k}$.
For every $k \in \N$, we consider $\Lambda(Q,k):=\{Q^\ell\}_{\ell=1}^{2^{kd}}$. Moreover, denoting $J_k:=\{1,\dots,2^{kd}\}^2$ for every $n \in \N$ and every $(i,j)\in J_k$, we define
\begin{equation}
\label{eqn:Pn}
T_n^{ij}:=\int_{\Lip(Q^i,Q^j)} I_\gamma d P_n(\gamma),
\end{equation}
which represents the portion of $T_n$ associated to the paths which begin in $Q^i$ and end in $Q^j$.

Notice that $T_n^{ij}$ depends implicitly on $k$; we will not explicit this dependence in the proof, apart from the steps 8 and 9 where the dependence on $k$ for the construction is more relevant.
By Theorem \ref{t:propr_good_dec}(3), we observe that \eqref{eqn:Pn} is a good decomposition. In particular, for every  $n\in \N$, denoting $T_n=[E_n,\tau_n,\theta_n]$, we have that $T_n^{ij}$ can be represented as $T_n^{ij}=[E_n,\tau_n,\theta_n^{ij}]$, with $\theta_n^{ij}(x)\cdot\theta_n(x)\geq 0$ and
\begin{equation}
\label{eqn:theta-n-ij}
 |\theta_n^{ij}(x)| \leq \min\{|\theta_n(x)|, P_n(\Lip(Q^i,Q^j))\}, \qquad \mbox{for $\Haus^1$-a.e $x\in E_n$}.
\end{equation}

\bigskip
{\it Step 2: Lagrangian description of $T$ and partition of the associated trajectories.} 

By Theorem~\ref{t:propr_good_dec}(2), $|\theta_n|\leq 1$ for $\Haus^1$-a.e. $x$. By \eqref{hp:energy-bound}, since $T_n$ are optimal, we deduce the following tightness condition for $P_n$:
\begin{equation}\label{e:definC}
\sup_{n\in \N}\int_{\Lip}{\text{length}}(\gamma) dP_n(\gamma)\overset{\eqref{eqn:buona-dec-mass-T}}{=}\sup_{n\in \N}\Mass(T_n)\overset{|\theta_n|\leq 1}{\leq} C <\infty.
\end{equation}
By \cite[Theorem 3.28]{BCM}, up to a further (non-relabelled) subsequence,  $P_n \wstarto P \in\Prob(\Lip) $. By \cite[Lemma 3.21]{BCM} $P$ is supported on eventually constant curves, and by Remark~\ref{rmk:limit} 
\begin{equation}\label{rep1}
T=\int_{\Lip}I_\gamma dP(\gamma).
\end{equation}
Notice that in general \eqref{rep1} could fail to be a good decomposition of $T$ in the sense of Definition~\ref{defn:GD}.
% be a small number which will be chosen at the end of the proof depending only on $d, \alpha,$ the constant $C$ which bounds the left-hand side of \eqref{hp:energy-bound} and $\MM(T)-\MM(T_{opt})$  (otherwise independent on $k$, which will be fixed in terms of $\e$ thanks to the conditions in Step 3 and 4, and $n$, which will be chosen also in terms of $\e$ and $k$). 

Analogously to \eqref{eqn:Pn}, one can define the portion of $T$ associated to the paths which begin in $Q^i$ and end in $Q^j$, as
$$ T^{ij}:=\int_{\Lip(Q^i,Q^j)} I_\gamma d P(\gamma).$$
Again we recall that the latter may fail to be a good decomposition.
By Theorem~\ref{t:propr_good_dec}(1) applied to $T^{ij}_n$ and $P_n$, we deduce that 
$$\partial_- T^{ij}_n = (e_0)_\# (P_n\trace \Lip(Q^i,Q^j)) \quad \text{ and } \quad \partial_+ T^{ij}_n = (e_\infty)_\# (P_n\trace \Lip(Q^i,Q^j)),$$
where $e_0: \gamma \in \Lip \mapsto \gamma(0)$ and $e_\infty: \gamma \in \Lip \mapsto \gamma(\infty)$.
 Passing to the limit in $n$, we deduce that
\begin{equation}
\label{Tij-boundary}
\partial_- T^{ij} %= (e_0)_\# P\trace \Lip(Q^i,Q^j)
= \int_{\Lip(Q^i,Q^j)}\delta_{\gamma(0)} d  P (\gamma)
\qquad \mbox{and} \qquad
\partial_+ T^{ij}  %= (e_\infty)_\# P\trace \Lip(Q^i,Q^j)
= \int_{\Lip(Q^i,Q^j)}\delta_{\gamma(\infty)} d  P (\gamma).
\end{equation}
%By uniqueness of the limit, f
For every $(i,j)\in J_k$ we remark that $T_n^{ij} \wto T^{ij}$. Since $\Mass(T_n^{ij}) \leq 1$ and $\Mass(\partial T_n^{ij}) \leq 1$, by Remark \ref{rmk_andrea}, we have
\begin{equation}
\label{eqn:conv-Tij}
\lim_{n}\Flat(T_n^{ij}-T^{ij})=0.
\end{equation}
Indeed, $P_n\trace \Lip(Q^i,Q^j) \wstarto P\trace \Lip(Q^i,Q^j)$, because they are obtained localizing the weakly-$*$ converging sequence $P_n \wstarto P$ to the set $\Lip(Q^i,Q^j)$,
whose boundary has $0$ $P$-measure by \eqref{eqn:grid-no-meas}:%, the boundary of the open set $\Lip(Q^i,Q^j)$ has $0$ $P$-measure:
\begin{equation*}
\begin{split}
 P \big(\partial \Lip(Q^i,Q^j)\big) &\leq P \big(\{ \gamma\in  \Lip: \gamma(0) \in \partial Q^i\}\big) +  P \big(\{ \gamma\in  \Lip: \gamma(\infty) \in \partial Q^j \}\big) 
\\
&= \mu^- (\partial Q^i) + \mu^+(\partial Q^j) = 0.
\end{split}
\end{equation*}

\bigskip
{\it Step 3: Isolating ``bad'' cubes containing most of the atomic part of $\mu^\pm$.} 

In the following, given a measure $\nu \in \Mpiu(X)$, we denote by $\nu_a$ its atomic part, i.e. the only measure such that $\nu_a \leq \nu$, $\nu_a$ is supported on a countable set and $(\nu-\nu_a)(\{x\})=0$ for every $x \in X$.
Since $\mu^\pm$ are finite measures, there exists $N \in \N$, such that the sum of their atomic parts can be written as 
\begin{equation}
\label{eqn:pocamassaneicubi1}
\mu_a^++\mu_a^-=\bigg(\sum_{h=1}^Nc_h\delta_{x_h}\bigg)+\mu_r, \quad \text{with} \quad \mathbb{M}(\mu_r)<\eee,
\end{equation}
for some $c_1, \dots, c_N \in \R$ and $N$ distinct points $x_1, \dots, x_N \in X$, (we are implicitly assuming that the two addenda in the RHS of \eqref{eqn:pocamassaneicubi1} are mutually singular).  
We observe that, for every $k \in \N$, the set $\{x_h: h=1,\dots,N\}$ is contained in at most $N$ cubes of $\Lambda(Q,k)$.  By \eqref{eqn:grid-no-meas}, and since $\mu^+,\mu^-$ are mutually singular, there exists $k_0$ such that, for every $k \geq k_0$, all these cubes are disjoint (hence their mutual distances is larger or equal than the edge length of each cube, i.e. $2^{-k+1}$) and contain a single Dirac delta. For every $k \in \N$, up to reordering, we denote these cubes by $\{Q^h: h=1\dots,N\}$. Again, we do not explicit the dependence of these cubes on $k$, but we observe that their number $N$ does not depend on $k$.

We recall that $\sfrac{5}{4}Q^h$ is the concentric cube to $Q^h$, enlarged by the factor $\sfrac{5}{4}$, so that the cubes $\sfrac{5}{4}Q^h$ remain disjoint; we denote 
\begin{equation}\label{defB}
B_k=\bigcup_{h=1}^N \frac{5}{4}Q^h \qquad \text{ and } \qquad G_k:=B^c_k,
\end{equation} 
Since the sequence $B_k$ converges monotonically decreasing to the finite set $\{x_h: h=1\dots,N\}$, there exists $k_1\geq k_0$  such that, for every $k \geq k_1$,
\begin{equation}\label{cubi cattivi}
\MM(T\trace B_k) = \int_{B_k\cap E} |\theta|^\alpha\, d\Haus^1 \leq \e.
\end{equation}

\bigskip
{\it Step 4: Multiplicity estimate for the pieces of $T_n$ which do not connect bad cubes.} 

Since $\mu^\pm_d:=\mu^\pm - \mu^\pm_a$ has trivial atomic part, then there exists $k_2 \geq k_1$  such that, for every $k\geq k_2$
\begin{equation}
\label{eqn:pocamassaneicubi0}
\max\{ \mu^+_d(Q^i), \mu^-_d(Q^i)\} <\eee, \qquad \text{for all } Q^i \in \Lambda(Q,k).
\end{equation}
Then, by \eqref{eqn:pocamassaneicubi1}, for every $k\geq k_2$
$$
\max\{ \mu^+(Q^i), \mu^-(Q^i)\} <2\eee, \qquad \text{for all } Q^i \in \Lambda(Q,k)\setminus \{Q^h: h=1\dots,N\} .
$$
Hence, by \eqref{hp:supp-n-convergence} and \eqref{eqn:grid-no-meas}, for every $k \geq k_2$ there exists $n_0= n_0(k)$ %{\color{red} (qui in realt\'a $n_0$ non dovrebbe dipendere da $k$. Riscriverei: there exists $n_0$ such that, for every $n \geq n_0$ for every $k \geq k_1$) NO BISOGNA LASCIAR LA DIPENDENZA} 
such that, for every $n \geq n_0$
\begin{equation}
\label{eqn:pocamassaneicubi}
\max\{ \mu^+_n(Q^i), \mu^-_n(Q^i)\} <3\eee, \qquad \text{for all } Q^i \in \Lambda(Q,k)\setminus \{Q^h: h=1\dots,N\} .
\end{equation}
Since $\mu^+$ and $\mu^-$ are mutually singular by assumption, and since each cube in $\{Q^h: h=1\dots,N\}$ contains at most $1$ of the $N$ points $x_1,\dots,x_N$, then by \eqref{eqn:pocamassaneicubi1} and \eqref{eqn:pocamassaneicubi0}, for every $k \geq k_2$
\begin{equation}
\label{eqn:pocamassaneicubi2}
\min\{ \mu^+(Q^i), \mu^-(Q^i)\} <2\eee \qquad \mbox{for all }  Q^i \in \Lambda(Q,k).
\end{equation}
Hence, for every $k \geq k_2$, there exists $n_1=n_1(k)\geq n_0(k)$  %{\color{red} (idem, $n_1$ non dovrebbe dipendere da $k$. Riscriverei: there exists $n_1>n_0$ such that, for every $n \geq n_1$ for every $k \geq k_1$)}
such that for every $n \geq n_1$
\begin{equation}
\label{eqn:pocamassaneicubi3}
\quad\min\{ \mu^+_n(Q^i), \mu^-_n(Q^i)\} <3\eee, \qquad \text{for all } Q^i \in \Lambda(Q,k).
\end{equation}

Using Theorem~\ref{t:propr_good_dec} (1,3) applied to $T^{ij}_n$, we deduce from \eqref{eqn:pocamassaneicubi3} that, for every couple of cubes $Q^i,Q^j$ such that either $Q^i$ or $Q^j$ belong to $ \Lambda(Q,k)\setminus \{Q^h: h=1\dots,N\}$, for every $k\geq k_2$ and for every $n\geq n_1$,
%WHY NOT TRUE
\begin{equation}\label{small density}
\begin{split}
|\theta_{T^{ij}_n}(x)| \leq P_n(\Lip(Q^i,Q^j)) \leq \min\{ \partial_- T_n(Q^i),  \partial_+ T_n(Q^j)\} = \min\{ \mu^-_n(Q^i),   \mu^+_n(Q^j)\} \leq 3\eee,
\end{split}
\end{equation}
for $\Haus^1$-a.e. $x\in E_n$.
%WHY We observe that for every $k \in \N$, there exists $\eta_k\in (0,1)$ such that
%$$|\mu^\pm|(\cup A(Q,k,\eta_k))<\e.$$ 
%$$ \mbox{ and } \quad AC:=BC\cup GC.$$

%For every $(i,j)\in \{1,\dots,2^{kd}\}^2$, we define 
%$$ T_{n,1}^{ij}:=T_{n}^{ij}\trace (Q^i\cup Q^j), \quad   T_{n,1}:=\sum_{i,j}  T_{n,1}^{ij}, \quad \mbox{and} \quad T_{n,2}:=T_n - T_{n,1}.$$
%Analogously to ... one can define a similar operation for the limit% $T^{ij}$, $T^{ij}_1$, $T_1$ and $T_2$ using the representation \eqref{rep1}. 
%$$ T_{1}^{ij}:=T^{ij}\trace (Q^i\cup Q^j), \quad   T_{1}:=\sum_{i,j}  T_{1}^{ij}, \quad \mbox{and} \quad T_{2}:=T - T_{1},$$
%For every $(i,j)\in \{1,\dots,2^{kd}\}^2$ we remark that
%\begin{equation}
%\label{eqn:conv-Tij}
%T^{ij}=\lim_{n}T_n^{ij}, \quad T^{ij}_1=\lim_{n}T_{n,1}^{ij}, \quad  T_1=\lim_{n} T_{n,1}, \quad \mbox{and} \quad  T_2=\lim_{n}T_{n,2}.
%\end{equation}
%The first equality in \eqref{eqn:conv-Tij} .....

\bigskip

{\em Step 5: Choice of slightly enlarged cubes to have a control on the slices.} 
%We observe that, given $\e_0>0$, a current $T_0= [E_0, \theta_0, \tau_0] \in \mathbf{R}_{1}(\R^d)$ of finite $\alpha$-mass such that $\MM(T_0 \trace \partial Q_0 )=0$ (namely $\Haus^1(E_0 \cap \partial Q_0 )=0$) and a cube $Q_0$, there exists $\delta_0>0$ (depending on $\e_0$, $T_0$ and $Q_0$) such that 
%$$\MM(T_0 \trace((1+\delta_0)Q_0 \setminus Q_0 ) ) \leq \e_0
%$$
%Indeed, this is a consequence of the fact that the finite measure $\theta_{T_0}^\alpha \, d\Haus^1$ must converge to $0$ on the vanishing sequence of sets $(1+\delta_0)Q_0 \setminus \overline{Q_0}$ and doesn't see $\partial Q_0 $.
%
%We apply this fact to each $T^{ij}$ {\color{red}(sul bordo non mette niente se richiedo che T come traffic plan non vede quei bordi - NON E' VERO)} to deduce that there exists $\eta_k\in (1,9/8)$ such that for every $(i,j)\in \{1,\dots,2^{kd}\}^2$ 
%\begin{equation}\label{stima1}
%\MM(T^{ij} \trace \big(\eta_k Q^i \setminus Q^i) \big) + \MM(T^{ij} \trace \big(\eta_k Q^j \setminus Q^j) \big)\leq \e 2^{-kd-1}. 
%\end{equation}

In the following we use the short notation $S_n^{ij}(\rho)$ and $S^{ij}(\rho)$ to denote respectively
$$\langle T_n^{ij},d_{x_i},\rho\rangle + \langle T_n^{ij},d_{x_j},\rho\rangle \quad\mbox{and}\quad \langle T^{ij},d_{x_i},\rho\rangle + \langle T^{ij},d_{x_j},\rho\rangle,$$
where $x_i$ denotes the center of the cube $Q^i$ and $d_x$ is defined in \S \ref{s:slicing}. 

For every $k\in\N$, and for a given pair $(i,j)\in J_k$, applying Lemma \ref{slice}, we get that, up to a (non relabelled) subsequence $\{T_n\}_{n\in\N}$, there exists a set of positive measure of radii $\rho_k^{ij}\in (2^{-k},\tfrac{5}{4}2^{-k})$ such that 
%the slice $S_n^{ij}$ of $T_n^{ij}$ and the slice $S^{ij}$ of $T^{ij}$  at $\partial \rho_k^{ij}Q^i \cup \partial \rho_k^{ij}Q^j$, denoted with %ETAk DEVE SPARIRE
%\begin{equation}
%\label{eqn:slice}
%  S_n^{ij}
%:=\partial (T_{n}^{ij}\trace (\rho_k^{ij}Q^i\cup \rho_k^{ij}Q^j)) - \partial T_{n}^{ij}\trace (\rho_k^{ij}Q^i\cup \rho_k^{ij} Q^j)
%\end{equation}
% \begin{equation}
%\label{eqn:slice2}
%  S^{ij}
%:=\partial (T^{ij}\trace (\rho_k^{ij}Q^i\cup \rho_k^{ij}Q^j)) - \partial T^{ij}\trace (\rho_k^{ij}Q^i\cup \rho_k^{ij} Q^j),
%\end{equation}
% satisfy
\begin{equation}\label{stima2-firstversion} 
\MM(S_n^{ij}(\rho_k^{ij})) \leq 4 \frac{ \MM(T_n^{ij})}{2^{-k-2}}\leq 2^{k+4}\MM(T_n)\stackrel{\eqref{e:definCmod}}{\leq} 2^{k+4}C,\quad \mbox{for every $n\in\N$},
\end{equation}
where the second inequality follows form Theorem \ref{t:propr_good_dec} (3). Since by \eqref{eqn:conv-Tij} and \eqref{e:def_slicing} for almost every radius $\rho_k^{ij}$
\begin{equation}\label{stima2conv}
\lim_{n\to \infty}\Flat(S^{ij}(\rho_k^{ij}) - S_n^{ij}(\rho_k^{ij}))=0,
\end{equation}
by lower semi-continuity of $\MM$ with respect to the flat convergence we deduce that
\begin{equation}\label{stima_massa_alfa_slice_T}
\MM(S^{ij}(\rho_k^{ij})) \leq 2^{k+4}C.
\end{equation}
Since for every $k\in\N$ the number of possible pairs $(i,j)$ is finite, up to choosing iteratively a (non relabelled) subsequence $\{T_n\}_{n\in\N}$, we can assume that estimates \eqref{stima2conv} and \eqref{stima_massa_alfa_slice_T} hold for every $(i,j) \in J_k$.

We observe that $\partial T_{n}^{ij}\trace (\rho_k^{ij}Q^i\cup \rho_k^{ij} Q^j)=\partial T_{n}^{ij}$ and analogously $\partial T^{ij}\trace (\rho_k^{ij}Q^i\cup \rho_k^{ij} Q^j)=\partial T^{ij}$, which combined with \eqref{e:def_slicing} gives respectively
\begin{equation}\label{e:defSnij}
S_n^{ij}(\rho_k^{ij})
=\partial (T_{n}^{ij}\trace (\rho_k^{ij}Q^i\cup \rho_k^{ij}Q^j))- \partial T_{n}^{ij},
\end{equation}
and
\begin{equation}\label{e:defSij}
 S^{ij}(\rho_k^{ij})=\partial (T^{ij}\trace (\rho_k^{ij}Q^i\cup \rho_k^{ij}Q^j))- \partial T^{ij}\trace (\rho_k^{ij}Q^i\cup \rho_k^{ij} Q^j).
\end{equation}
Consequently, we deduce respectively that
\begin{equation}
\label{eqn:sij}
[S^{ij}_n(\rho_k^{ij})](\R^d) = 0, \quad \text{and} \quad [S^{ij}(\rho_k^{ij})](\R^d) = 0.
\end{equation}
We denote
$$S:=\sum_{(i,j)\in J_k}S^{ij}(\rho_k^{ij}) \quad \text{and} \quad S_n:=\sum_{(i,j)\in J_k}S^{ij}_n(\rho_k^{ij}).$$
{\em Step 6: Transport between $\partial T$ and the corresponding slices $S$.} 

We define 
\begin{equation}\label{def1}
T_{n,\3}^{ij}:=T_{n}^{ij}\trace (\rho_k^{ij}Q^i\cup \rho_k^{ij}Q^j), \quad   T_{n,\3}:=\sum_{(i,j)\in J_k}  T_{n,\3}^{ij} %\quad T_{n,\4}:=T_n - T_{n,\3}, \quad \mbox{and} \quad  T_\3=\lim_{n}T_{n,\3}
.
\end{equation}
We remark that, by Theorem \ref{t:propr_good_dec}(2), for $\Haus^1$-a.e. $x$
\begin{equation}\label{stimadens}
\begin{split}
|\theta_{T_{n,\3}}(x)|&= \bigg|\sum_{(i,j)\in J_k}  \theta_{T_{n,\3}^{ij}}(x)\bigg| \leq \sum_{(i,j)\in J_k}  \theta_{T_{n}^{ij}}(x) \\
&\leq \sum_{(i,j)\in J_k}  P_{n}(\{\gamma \in \Lip(Q^i,Q^j): x \in {\rm{Im}}\gamma \}) \leq P_{n}(\{\gamma \in \Lip: x \in {\rm{Im}}\gamma \}) = |\theta_{T_{n}}(x)|.
\end{split}
\end{equation}
We observe that, since $\partial T_{n}^{ij}= \partial T_{n}^{ij}\trace (Q^i\cup Q^j)$, by \eqref{e:def_slicing}
\begin{equation}\label{eqn:tn3-boundary}
\partial T_{n,\3}=\sum_{(i,j)\in J_k}\partial T_{n,\3}^{ij}= \sum_{(i,j)\in J_k}\partial T_{n}^{ij}\trace (\rho_k^{ij}Q^i\cup \rho_k^{ij} Q^j)+ S_n^{ij}(\rho_k^{ij})=\sum_{(i,j)\in J_k}\partial T_{n}^{ij}+ S_n^{ij}(\rho_k^{ij})=\partial T_{n}+ S_n.
\end{equation}
Analogously one can define $T^{ij}_\3$ and $T_\3$ as
$$ %\begin{equation}\label{eqn:t3-boundary}
T_{\3}^{ij}:=T^{ij}\trace (\rho_k^{ij}Q^i\cup \rho_k^{ij}Q^j), \quad   T_{\3}:=\sum_{(i,j)\in J_k}  T_{\3}^{ij}.
$$ %\end{equation}
%QUESTO E' FALSO Since $T_{3}^{ij}$ and $T_{n,3}^{ij}$ are obtained by localizing on a set whose boundary has measure $0$ for $T^{ij}$ WHY the converging sequence $T^{ij}_n \rightharpoonup T^{ij}$, by uniqueness of the limit, we immediately get that (weakly as measures)
%$$T^{ij}_3=\lim_{n}T_{n,3}^{ij}, \quad \mbox{and} \quad   T_3=\lim_{n} T_{n,3}.$$
 % since $T_{n,3}^{ij} \wto T_{3}^{ij}$, then $S_n^{ij}\wto S^{ij}$ and 
We have
%\begin{equation}
%\begin{split}
\begin{align}\nonumber
\partial T_\3&= \sum_{(i,j)\in J_k} \partial T_{\3}^{ij}= \sum_{(i,j)\in J_k} \partial (T^{ij}\trace (\rho_k^{ij}Q^i\cup \rho_k^{ij}Q^j))\stackrel{\eqref{e:defSij}}{=} \sum_{(i,j)\in J_k}  (\partial T^{ij}\trace (\rho_k^{ij}Q^i\cup \rho_k^{ij}Q^j)+ S^{ij}(\rho_k^{ij}))\\
&=\sum_{(i,j)\in J_k}  (\partial T^{ij}\trace (Q^i\cup Q^j))+S=(\mu^+-\mu^-)+S=\partial T_{opt}+S.\label{eqn:t3-boundary}
\end{align}
%\end{split}
%\end{equation}
%Furthermore, we can compute
%\begin{equation}\label{eqn:en-T3}
%\begin{split}
%\MM(T_3)&= \MM\Big(\sum_{(i,j)\in GC}  T_{3}^{ij}\Big)= \MM\Big(T_1+\sum_{(i,j)\in GC}  T^{ij} \trace (\rho_k^{ij} Q^i \setminus Q^i )\cup( \rho_k^{ij} Q^j \setminus Q^j)\Big)\\
%&\leq \MM(T_1)+\sum_{(i,j)\in GC}  \MM\big(T^{ij} \trace (\rho_k^{ij} Q^i \setminus Q^i )\cup( \rho_k^{ij} Q^j \setminus Q^j)\big)\overset{\eqref{1},\eqref{stima1}}{\leq} \e+\e=2\e.
%\end{split}
%\end{equation}

\bigskip
{\em Step 7: Connection of the slices of $T_n$ and $T$. } 

%For every couple $(i,j)\in GC$ and $n\in \N$, we define $\e^{ij}_n$ through the equality
%$$P_n(\Lip(Q^i,Q^j))=(1+\e^{ij}_n)P(\Lip(Q^i,Q^j))$$
%(notice that, by definition of $GC$, $P(\Lip(Q^i,Q^j) \neq 0$).
%We observe that $ \e^{ij}_n \to 0 \mbox{ as } n \to +\infty$ by \eqref{eqn:Pnlip-conv-senza eta}. Hence, there exists $n_3=n_3(k)\geq n_2(k)$, such that for every $n\geq n_3$ we have
%\begin{equation}
%\label{eqn:gli-eps-sono-piccoli}
%|\e^{ij}_n| \leq \frac{\eee}{4^{kd}} \qquad
%\mbox{for every }(i,j)\in GC.
%\end{equation}
 We define 
$$\sigma_n:=\sum_{(i,j)\in J_k}  (S_n^{ij}(\rho_k^{ij}))_++\sum_{(i,j)\in J_k} ( S^{ij}(\rho_k^{ij}))_- \quad \mbox{and} \quad \nu_n:= \sum_{(i,j)\in J_k} ( S^{ij}(\rho_k^{ij}))_++\sum_{(i,j)\in J_k}  (S_n^{ij}(\rho_k^{ij}))_-.$$
We observe that
$$\sigma_n- \nu_n\wto 0, \quad \mbox{and} \quad \Mass(\sigma_n)=\Mass(\nu_n). $$ 
Indeed, the weak-$*$ convergence holds because, by \eqref{stima2conv}, we get
$$\sum_{(i,j)\in J_k}  (S_n^{ij}(\rho_k^{ij}))_+ \wto \sum_{(i,j)\in J_k}  (S^{ij}(\rho_k^{ij}))_+ %=  S_+
\quad \mbox{and} \quad \sum_{(i,j)\in J_k}  (S_n^{ij}(\rho_k^{ij}))_-\wto  \sum_{(i,j)\in J_k} ( S^{ij}(\rho_k^{ij}))_-
% =S_-
.$$
%Concerning the equality of the masses, by \eqref{eqn:sij}, the analogous equality for $S^{ij}_n$, and \eqref{Tij-boundary}
% we observe that %WHY
%\begin{equation}
%\begin{split}
% \sum_{(i,j)\in J_k}  (S^{ij})_+(\R^d) &- \sum_{(i,j)\in J_k}  (S^{ij})_-(\R^d)= S(\R^d) =\sum_{(i,j)\in J_k}\partial_-T^{ij}(Q^i)+\partial_+T^{ij}(Q^j)
%\\&=2\sum_{(i,j)\in J_k}P(\Lip(Q^i,Q^j))
%\\&
%=2\sum_{(i,j)\in J_k}\frac{1}{1+\e^{ij}_n}P_n(\Lip(Q^i,Q^j))\\
%&=\sum_{(i,j)\in J_k}\frac{1}{1+\e^{ij}_n}\partial_-T^{ij}_n(Q^i)+\sum_{(i,j)\in J_k}\frac{1}{1+\e^{ij}_n}\partial_+T^{ij}_n(Q^j) = S_n(\R^d)\\
%&=\sum_{(i,j)\in J_k}\frac{1}{1+\e^{ij}_n}  (S_n^{ij})_+(\R^d)-\sum_{(i,j)\in J_k}\frac{1}{1+\e^{ij}_n}  (S_n^{ij})_-(\R^d),
%\end{split}
%\end{equation}
%from which 
By \eqref{eqn:sij}, we deduce that
$$\Mass (\nu_n)-\Mass(\sigma_n)=\sum_{(i,j)\in J_k}  (S^{ij}(\rho_k^{ij}))_+(\R^d) - %\sum_{(i,j)\in J_k}  
(S^{ij}(\rho_k^{ij}))_-(\R^d)+%\sum_{(i,j)\in J_k}
  (S_n^{ij}(\rho_k^{ij}))_-(\R^d)-%\sum_{(i,j)\in J_k}
 (S_n^{ij}(\rho_k^{ij}))_+(\R^d)=0.$$
Moreover, thanks to \eqref{stima2-firstversion} and \eqref{stima_massa_alfa_slice_T} we have that
$$\MM(\sigma_n)+\MM(\nu_n)\leq 2^{2kd+k+6}C.$$
Applying Lemma \ref{high_multiplicity}, for every $k \geq k_2$, there exists $n_2=n_2(k)\geq n_1(k)$  %{\color{red} (idem, $n_2$ non dovrebbe dipendere da $k$. Riscriverei: there exists $n_2>n_1$ such that, for every $n \geq n_2$ for every $k \geq k_3$) NO E' IMPORTANTE PERCHE" LA COSTANTE C DEL LEMMA DIPENDE DA k.}
 such that for every $n\geq n_2$ there exists a transport $T_{n,conn}$ such that
\begin{equation}\label{connessionebord}
\partial T_{n,conn}= \nu_n- \sigma_n=S-S_n, \quad \mbox{and} \quad \MM(T_{n,conn})<\e.
\end{equation}

\bigskip
{\it Step 8: Improved semi-continuity of the energy to bound a modified density of $T$ which neglects cancellations among different partitions.} 

In this step we will label the dependence of $T^{ij}$ and $T_n^{ij}$ from $k$ explicitly, with the notation $T^{ij}_k$ and $T^{ij}_{n,k}$. In particular we write $T^{ij}_k=[E_k^{ij}, \tau_k^{ij}, \theta_k^{ij}]$. Let us consider the rectifiable set $E=\cup_{k\in \N} \cup_{i,j}E_k^{ij}$ and $\bar \theta_k=\sum_{ij}|\theta_k^{ij}|$. We claim that for $\Haus^1$-a.e. $x\in E$, the sequence $\bar \theta_k(x)$ is non-decreasing in $k$ and that, setting  $\bar \theta=\sup_{k\in \N}\bar \theta_k$, we have
\begin{equation}\label{bound1}
\int_{E}\bar \theta^\alpha d\Haus^1 \leq C.
\end{equation}

To prove this claim, we define the positive measures $\nu^{ij}_k:=|\theta_k^{ij}| \Haus^1\trace E_k^{ij}\in \Mpiu(\R^d)$ associated to $T^{ij}_k$ and the measure $\nu_k:=\sum_{ij}\nu^{ij}_k=\bar\theta_k \Haus^1\trace E$. By the good decomposition of $T_n$, we deduce that 
\begin{equation}\label{bound0}
\Mass(T_n)=\sum_{ij}\Mass(T^{ij}_{n,k}).
\end{equation}
By \eqref{eqn:conv-Tij} and \eqref{bound0}, we can then apply Lemma \ref{lemmasem} to the sequence $T^{ij}_{n,k}$ to deduce that for every fixed $k \in \N$
\begin{equation}\label{bound}
\int_{E}\bar \theta_k^\alpha d\Haus^1 \leq \liminf_{n \to \infty}\MM(T_n)\leq C.
\end{equation}
Furthermore, we observe that $\nu_k \leq \nu_{k+1}$ for every $k \in \N$. Indeed, 
$$\theta_k^{ij}=\sum_{s,t: Q^s\subset Q^i, Q^t\subset Q^j}\theta_{k+1}^{st},$$
where we intend that $Q^s, Q^t$ belong to $\Lambda(Q, k+1)$ and $Q^i, Q^j$ belong to $\Lambda (Q,k)$.
Therefore
$$\bar \theta_k=\sum_{ij}|\theta_k^{ij}|=\sum_{ij}\bigg|\sum_{s,t: Q^s\subset Q^i, Q^t\subset Q^j}\theta_{k+1}^{st}\bigg|\leq \sum_{ij}\sum_{st: Q^s\subset Q^i, Q^t\subset Q^j}|\theta_{k+1}^{st}|=\sum_{s,t}|\theta_{k+1}^{st}|= \bar \theta_{k+1}.$$

Consequently, the monotonicity together with the uniform bound in $k$ \eqref{bound}, yields \eqref{bound1}.

\bigskip
{\em Step 9: Energy estimate for $T_\3$.} 

We claim that there exist infinitely many indexes $\{k_h\}_{h\in\N}$ such that
\begin{equation}\label{eqn:en-T3}
\MM(T_\3)<\e.
\end{equation}
In the proof of this step we will trace the dependence of $T_\3$ from $k$ explicitly with the notation $T_\3^k$. We first observe that $\Mass(T_\3^k)\to 0$ as $k \to +\infty$.
To this aim, we denote by $\length(\gamma)$ the length of any curve $\gamma\in \Lip$. 
Since the function $\length$ is lower semi-continuous on $\Lip$ and $P_n$ converge weakly-$*$ as measures, by the good decomposition property \eqref{eqn:buona-dec-mass-T} of $T_n$, and since finally by Theorem~\ref{t:propr_good_dec}(2) the density of $T_n$ is bounded by $1= P_n(\Lip)$, we have
\begin{equation}
\label{eqn:length-int}
\int_{\Lip} \length(\gamma) \,dP \leq% \mathbb E ^\alpha(P) \leq \liminf E ^\alpha(P_n)
\liminf_{n\to \infty} \int_{\Lip} \length(\gamma) \,dP_n = \liminf_{n\to \infty} \Mass(T_n) \leq \liminf_{n\to \infty} \MM(T_n).
\end{equation}
Hence we know that $\length(\gamma)\in L^1(P)$. Now we define
$$A_k(\gamma):= \bigcup \{\rho_k^{ij} Q^i \cup \rho_k^{ij} Q^j: \, Q^i,Q^j \in \Lambda(Q,k), \, \gamma(0) \in Q^i \, \text{and } \gamma(\infty)\in Q^j\},$$
and the function $\length_k:\Lip \to [0,+\infty)$ as
$$\length_k(\gamma):=\int_{0}^\infty|\dot{\gamma}|(t)\chi_{\{s : \gamma(s) \in A_k(\gamma)\}}(t)dt  = \Haus^1({\rm Im}\gamma \cap A_k(\gamma) ).$$ 
We can then estimate
$$\Mass(T_\3^k)\leq \int_{\Lip} \length_k(\gamma) dP(\gamma).$$
%Since $P_n \in TP_C$, we can deduce that also 
As observed above, the limit $P$ has the property that $\gamma$ is an eventually constant curve for $P$-a.e. $\gamma$. We consequently deduce that $\length_k(\gamma) \to 0$ for $P$-a.e. $\gamma \in \Lip$.
Moreover, $\length_k(\gamma)\leq \length(\gamma)$.
Since $\length\in L^1(P)$, by dominated convergence we deduce that 
\begin{equation}\label{mass}
\lim_{k\to \infty}\Mass(T_\3^k)\leq \lim_{k\to \infty}\int_{\Lip} \length_k(\gamma) dP(\gamma)= 0%, \quad \text{as } \, k \to +\infty
.
\end{equation}
By \eqref{mass}, there exists a subsequence $\{k_h\}_{h\in\N}$ such that the density $\theta_{\3,{k_h}}$ of $T_{\3}^{k_h}$ satisfies $\theta_{\3,{k_h}}(x)\to 0$ as $h \to \infty$ for $\Haus^1$-a.e. $x\in E$. %In particular we get that $|\theta_{1,k}(x)|^\alpha\to 0$ as $k \to \infty$ for $\nu$-a.e. $x$.
Moreover, thanks to \eqref{bound1}, we deduce that $|\theta_{\3,{k_h}}|^\alpha \leq \bar \theta_{k_h}^\alpha \leq \bar \theta^\alpha \in L^1(\Haus^1\trace E)$ (where the set $E$ and the multiplicities $\bar \theta_{k_h}$ and $\bar \theta$ have been defined in Step 8) and consequently, by dominated convergence, that
$$\MM(T_\3^{k_h})=\int_{E}|\theta_{\3,k_h}|^\alpha d\Haus^1 \to 0 \quad \mbox{as } h \to \infty,$$
which implies the claim in \eqref{eqn:en-T3}.

\bigskip
{\em Step 10: Construction of the energy competitor for $T_n$.}

In the rest of the proof we fix
\begin{equation*}
k\in\{k_h\}_{h\in\N} \text{ with } k\geq\max\{\bar{k}, k_2\}, \qquad \text{and} \qquad n\geq n_2(k),
\end{equation*}
 where $\bar{k}$ and $n$ are obtained in Lemma \ref{second}, with $\sfrac{\Delta}4$ in place of $\Delta$ and $\{G_k\}_{k \in \N}$, $\{T_n\}_{k \in \N}$ and $T$ being those used so far in the proof of Theorem \ref{thm:main}. We recall that $k_2$ was defined \eqref{eqn:pocamassaneicubi0}, $\{k_h\}$ in \eqref{eqn:en-T3}, and $n_2(k)$ in \eqref{connessionebord}.

 We deduce from \eqref{concl1} the following estimate
 \begin{equation}\label{conc2}
\mathbb{M}^\alpha\left (T_{n}\trace (G_{k} \cap \left \{\theta_{n}>\sqrt{\e}\right\})\right ) \overset{\eqref{eps}}{\geq} \mathbb{M}^\alpha\left (T_{n}\trace \left(G_{k} \cap \left \{\theta_{n}>\left(\frac{\delta_{\Delta/4}}{2C}\right)^\frac{1}{1-\alpha}\right\}\right )\right) \overset{\eqref{concl1}}{\geq}  \mathbb{M}^\alpha(T)-\frac{\Delta}{4}.
\end{equation}
In the first inequality we used that $\sqrt{\e} \leq \left(\sfrac{\delta_{\Delta/4}}{2C}\right)^\frac{1}{1-\alpha}$, by \eqref{eps}.
We define the following traffic path:
$$T_{n,comp}:= T_{n,conn}+T_{opt}-T_\3+T_{n,\3}.$$
This is a competitor for $T_n$, namely $\partial T_{n,comp}=\partial T_{n}$. Indeed, thanks to \eqref{eqn:t3-boundary}, \eqref{eqn:tn3-boundary}, and finally \eqref{connessionebord}, we compute
$$
\partial T_{n,comp}=\partial T_{n,conn}+ \partial T_{opt}-\partial T_\3+\partial T_{n,\3}= \partial T_{n,conn}
 -S+\partial T_{n}+ S_n \overset{\eqref{connessionebord}}{=}\partial T_{n}.
$$

\bigskip
{\em Step 11: Energy estimate and conclusion.}

To estimate the energy of the competitor $T_{n,comp}$ we first use the sub-additivity of $\MM$ and the smallness of the energy contributions of $T_{n,conn}$ and $T_\3$, in view of \eqref{connessionebord} and \eqref{eqn:en-T3}% and Step 1 together with Lemma on enlargin cubes...
.
We obtain that
$$
\MM(T_{n,comp})\leq \MM(T_{n,\3})+\MM(T_{n,conn})+\MM(T_{opt})+\MM(T_\3)\leq \MM(T_{n,\3})+\MM(T_{opt})+2\e,
$$
which, combined with \eqref{gap} and \eqref{conc2}, reads
\begin{equation}\label{eqn:en-est}
\MM(T_{n,comp})\overset{\eqref{gap}}{\leq}  \MM(T_{n,\3})+\MM(T)-\Delta+2\e \overset{\eqref{conc2}}{\leq} \MM(T_{n,\3})+\MM(T_n\trace (G_k\cap \{|\theta_n| >\sqrt{\e}\}))-\frac{3\Delta}{4}+2\e,
\end{equation}
%Next, we recall taht $B_k$ and $G_k$ are defined in \eqref{defB} and we compute:
%$$ \MM(T_{opt}) = \MM(T) + \MM(T_{opt})- \MM(T) = \MM(T\trace G_k) + \MM(T\trace B_k) -\Delta $$
%By \eqref{cubi cattivi} and \eqref{eqn:en-est}, we get
%\begin{equation}
%\MM(T_{n,comp})\leq \MM(T_{n,\3})+\MM(T\trace G_k) - \Delta +4\e.
%\end{equation}
%Thanks to $\Flat(T_n,T)\to 0$, we apply Lemma \ref{lsc} with $\e_0:=\e$ and $A_k:=G_k$ and we obtain there exists $\e_1(d, \alpha,C,T,\e_0)>0$ and $k_0(d, \alpha,C,T,\e_0) \in \N$ such that, if $\Flat (T- T') \leq \e_1$,
%%	\begin{equation}\label{e:high_mult}
%%	\mathbb M^\alpha(T'\trace\{x\in A_k: |\theta'(x)|>\sqrt{\e}\})\geq \mathbb M^\alpha(T \trace A_k)-\e_0, \quad\mbox{for all $k\geq k_0$ and for all $\e<\e_1$.}
%	to the open set $G_k$, deducing that
%\begin{equation}
%\label{eqn:prima-T12}
%\MM(T_{n,comp})\leq \MM(T_{n,\3})+\MM(T_n\trace (G_k\cap \{\theta_n >\sqrt{\e}\}))- \Delta +5\e.
%\end{equation} %WHY OFF DIAGONAL
Next, we call $\overline{T}_1:= T_{n,\3}$ and $\overline{T}_2:= T_n\trace (G_k\cap \{|\theta_n| >\sqrt{\e}\})$ and we estimate their densities.
 
We first observe that, by \eqref{defB}, it holds $G_k \cap (\rho_k^{ij}Q^i\cup \rho_k^{ij}Q^j)\subset B_k^c$. This implies that for every $x\in G_k \cap (\rho_k^{ij}Q^i\cup \rho_k^{ij}Q^j)$, either $Q^i$ or $Q^j$ belong to $ \Lambda(Q,k)\setminus \{Q^h: h=1\dots,N\}$.
Recalling the definition \eqref{def1} $T_{n,\3}^{ij}=T_{n}^{ij}\trace (\rho_k^{ij}Q^i\cup \rho_k^{ij}Q^j)$, applying \eqref{small density}, we can estimate the density of $\overline{T}_1$ as follows
\begin{equation}
\label{eqn:t1bounds-0}
|\theta_{\overline{T}_1}| \leq { \eee}\qquad \mbox{for $\Haus^1$-a.e. $x\in  G_k$}.
\end{equation}
Notice that \eqref{eqn:t1bounds-0} may no longer hold for $x\notin G_k$: indeed \eqref{small density} may fail if both $Q^i$ and $Q^j$ belong to $\{Q^h: h=1\dots,N\}$.

% In terms of the density of $T_n$, we can estimate the density of $\bar T_1$ as follows: recalling the smallness $\e^{ij}_n \leq \eee$ in \eqref{eqn:gli-eps-sono-piccoli}, we get that the first term has density controlled by $\eee/ (1-\eee) \theta_{T_n}$, the second term by $\sum_{(i,j)\in GC} \theta_{T^{ij}_{n}}$, the third term by $\frac{1}{1-\eee} \sum_{(i,j)\in GC} \theta_{T^{ij}_{n,\3}}$, which in turn is estimated by $\frac{1}{1-\eee} \sum_{(i,j)\in GC} \theta_{T^{ij}_{n}}$. By the good decomposition property of $P_n$, and since $BC$ and $GC$ are disjoint, we see that the last two terms, when added, are controlled by $\frac{1}{1-\eee} \theta_{T_n}$. Hence,  we find that
%\begin{equation}
%\label{eqn:t1bounds}\theta_{\bar T_1} \leq \frac{ \eee}{1-\eee}\theta_{T_n}+ \frac{1+ \eee}{1-\eee}\theta_{T_n} \leq {(1+ 6\eee)}\theta_{T_n} \qquad \mbox{for every } x\in \R^d.
%\end{equation}

On the other side, the density of $\overline{T}_2$ satisfies
\begin{equation}
\label{eqn:t2bounds}
\sqrt {\eee} \leq |\theta_{\overline{T}_2}(x)| \leq |\theta_{T_n}(x)|,\qquad \mbox{for $\Haus^1$-a.e. $x\in  G_k\cap \{ |\theta_{\overline{T}_2}|>0\}$}.
\end{equation}
Combining the bounds \eqref{eqn:t1bounds-0} and \eqref{eqn:t2bounds}, we deduce that
\begin{equation}\label{ineq}
|\theta_{\overline{T}_1}|^\alpha+ |\theta_{\overline{T}_2}|^\alpha \leq \eee^\alpha+ |\theta_{\overline{T}_2}|^\alpha \leq (\eee^{\alpha/2} +1)|\theta_{\overline{T}_2}|^\alpha,\qquad \mbox{for $\Haus^1$-a.e. $x\in  G_k\cap \{ |\theta_{\overline{T}_2}|>0\}$}.
\end{equation}
We employ this inequality together with \eqref{stimadens} in the energy estimate 
\begin{equation*}
\begin{split}
\MM(\overline{T}_1)+ \MM(\overline{T}_2) &= \MM(\overline{T}_1 \trace (G_k^c \cup(G_k \cap \{ \theta_{\overline{T}_2}=0\})))+  \int_{G_k \cap \{ |\theta_{\overline{T}_2}|>0\}}|\theta_{\overline{T}_1}|^\alpha+|\theta_{\overline{T}_2}|^\alpha d\Haus^1
\\
&\overset{\eqref{stimadens},\eqref{ineq}}{\leq}\MM(T_n \trace (G_k^c \cup(G_k \cap \{ \theta_{\overline{T}_2}=0\})))+  (\eee^{\alpha/2} +1)\int_{G_k \cap \{ |\theta_{\overline{T}_2}|>0\}} |\theta_{\overline{T}_2}|^\alpha d\Haus^1
\\
&\overset{\eqref{eqn:t2bounds}}{\leq} \MM(T_n \trace (G_k^c \cup(G_k \cap \{ \theta_{\overline{T}_2}=0\})))+  (\eee^{\alpha/2} +1)\int_{G_k \cap \{ |\theta_{\overline{T}_2}|>0\}} \theta_{T_n}^\alpha d\Haus^1 
\\
&= \MM(T_n \trace (G_k^c \cup(G_k \cap \{ \theta_{\overline{T}_2}=0\})))+(\eee^{\alpha/2} +1)\MM(T_n\trace (G_k \cap \{ |\theta_{\overline{T}_2}|>0\}))\\
& = (\eee^{\alpha/2} +1)\MM(T_n).
\end{split}
\end{equation*}
%In the last inequality, we used the upper bounds on $\theta_{\bar T_1}$ and $\theta_{\overline{T}_2}$ in \eqref{eqn:t1bounds} and \eqref{eqn:t2bounds}.

We plug this estimate in \eqref{eqn:en-est} and we recall  that $\MM(T_n)\leq C$, so that
\begin{equation}\label{eqn:finita}
\begin{split}
\MM(T_{n,comp}) \leq (\eee^{\alpha/2} +1)\MM(T_n)-\frac{3\Delta}{4}+2\e \leq \MM(T_n)-\frac{3\Delta}{4}+2\e+C\eee^{\alpha/2} \overset{\eqref{eps}}{\leq} \MM(T_n)-\frac{\Delta}{2}.
\end{split}
\end{equation}
The estimate \eqref{eqn:finita} contradicts the optimality of $T_n$.
%n first line we use subadditivity of $\MM$, in the second inequality we use the claim of Step 1 together with Lemma on enlargin cubes,in third inequality we use the optimality gap,

% in fourth line we decompose the current $T$ in two disjoint parts, 
%in the fifth inequality we use Lemma (quello sulla concentrazione delle misure) to get that for $k$ big enough $\MM(T\trace B)<\e$, in the sixth inequality, in the sevent line, 
%we use that $\sum_{(i,j)\in GC}\left(\frac{\e^{ij}_n}{1+\e^{ij}_n}T^{ij}_n\right)+T_{n,\4}+\sum_{(i,j)\in GC}\left(\frac{1}{1+\e^{ij}_n}T^{ij}_{n,\3}\right)$ has density smaller than $\e$ in $G$ combined with Lemma quasiadditivity, in the eigth line we use that the sum $\sum_{(i,j)\in GC}\left(\frac{\e^{ij}_n}{1+\e^{ij}_n}T^{ij}_n\right)+T_{n,\4}+\sum_{(i,j)\in GC}\left(\frac{1}{1+\e^{ij}_n}T^{ij}_{n,\3}\right)+T_n\trace (G\cap \{\theta_n >\sqrt{\e}\})$ is a subcurrent of $(1+C(k)\e)T_n$ .

%\subsection*{Acknowledgements}
%M. C. acknowledges the support of Dr. Max R\"ossler, of the Walter Haefner Foundation and of the ETH Z\"urich Foundation.

\begin{remark}\label{hmas}
In the spirit of the works \cite{White1999,depauwhardt,flat-relax}, we can replace  $x \mapsto |x|^\alpha$ with more general functions $H:\R\to[0,\infty)$ that are even, sub-additive, lower semi-continuous, monotone non-decreasing in $(0,+\infty)$, continuous in $0$ and satisfying $H(0)=0$. The associated functionals on traffic paths are usually called $H$-masses and are defined as
$$\Mass_H (T):=\int_E H(\theta(x)) d\Haus^1(x), \qquad \mbox{where $T=[E,\tau,\theta]\in \mathbf{R}_{1}(\R^d)$}.$$ 
The obvious analogue of Theorem \ref{thm:main} holds true. We divide the argument in two cases:
\begin{itemize}
\item {\em First case: $\lim_{\theta\to 0^+} \sfrac{H(\theta)}{\theta}=+\infty$.} For every $\delta>0$ there exists $\e(\delta,H)>0$ such that $\sfrac{\e(\delta,H)}{H(\e(\delta,H))}<\delta$.  One can repeat the proof of all the statements of Section \ref{sec:con} just changing $\MM$ with $\Mass_H$. The only differences are in Lemma \ref{second}: the statement \eqref{concl} becomes
$$
\mathbb{M}_H\left (T_{n}\trace \left(G_{k} \cap \left \{|\theta_{n}|>\e\left(\frac{\delta_{T,\Delta}}{2C},H\right)\right\}\right )\right) \geq  \mathbb{M}_H(T)-\Delta,
$$ 
in  the proof we choose $\e:=\e(\sfrac{\delta_{T,\Delta}}{2C},H)$ and we change \eqref{eqn:mass-to-0} in
$$\mathbb M(T_{n_i}\trace (G'_{k_i}\cap \{|\theta_{n_i}|\leq \e \}))<\frac{\e}{H(\e)}\mathbb M_H(T_{n_i}\trace (G'_{k_i} \cap \{|\theta_{n_i}|\leq \e\}))<C\frac{\e}{H(\e)}<\frac{\delta_{T,\Delta}}{2}.$$
We can then repeat verbatim Section \ref{pro}, with the same proof of Theorem \ref{thm:main}, just changing $\MM$ with $\Mass_H$ and modifying \eqref{conc2} according to the new version of Lemma \ref{second}.
\item {\em Second case: $\liminf_{\theta\to 0^+} \sfrac{H(\theta)}{\theta}<+\infty$.} Then it is easy to show that the minimal transport energy
$$W^H(\mu^-,\mu^+):=\inf\{\Mass_H(T): \mbox{$T$ is a traffic path connecting $\mu^-$ to $\mu^+$}\},$$
defined analogously to \eqref{mainp}, metrizes the weak-$*$ convergence of measures. We can then simply repeat the proof in \cite[Proposition 6.12]{BCM} to get the validity of Theorem \ref{thm:main}.
\end{itemize}
We observe moreover that the continuity of $H$ in $0$ is a necessary hypothesis for the validity of Theorem \ref{thm:main}. Indeed consider the case of the size, i.e.
\begin{equation}\label{size}
H(\theta)=1 \quad \mbox{on $\R\setminus \{0\}$ \qquad  and \qquad }H(0)=0.
\end{equation}
Consider $\mu^-:=\delta_0$ and $\mu^+:= \delta_{e_1}$; for every $n \in \N$ we define 
$$\mu^-_n:= \delta_0 \qquad \mbox{and}\qquad \mu^+_n:=\frac 1n \delta_{\sfrac{e_1}{2}+\sfrac{e_2}{8}}+\left(1-\frac 1n\right)\delta_{e_1}.$$
Since $\mu^-_n$ and $\mu^+_n$ are finite atomic measures, by \cite[Proposition 9.1]{BCM} the optimal traffic path $T_n$ is a finite graph made of segments with no loops. Moreover, by \eqref{size}, the energy is the sum of the length of the segments composing the graph.
In particular, the graph has to be connected, since both the points $\sfrac{e_1}{2}+\sfrac{e_2}{8}$ and $e_1$ have to be connected to $0$.
As a consequence, the energy of any traffic path in $\TP(\mu^-_n,\mu^+_n)$ must be bigger or equal than the length of the minimal tree connecting the three points, which is the union of the support of the following two curves $\gamma_1:[0,1]\to \R^d$
\begin{equation}
\gamma_1(t):= t \left(\frac{e_1}2+\frac  {e_2}8\right), \qquad \mbox{and} \qquad 
\gamma_2(t):= \frac {1+t}2 e_1+\frac {1-t}8e_2.
\end{equation}
Hence $W^H(\mu^-_n,\mu^+_n)=\frac{\sqrt{17}}{4}$ for every $n \in \N$ and an optimal traffic path $T_n\in \OTP(\mu^-_n,\mu^+_n)$ is
\begin{equation*}
T_n:=I_{\gamma_1}+(1-\sfrac{1}{n})I_{\gamma_2}.
\end{equation*}
We observe that
$$T_n\rightharpoonup T:= I_{\gamma_1}+I_{\gamma_2}.$$
As previously observed $\mathbb M_H(T)=\sfrac{\sqrt{17}}{4}>1\geq W^H(\mu^-,\mu^+)$ (since the segment joining $\mu^-$ and $\mu^+$ has energy one).
Since $\mu^\pm_n\rightharpoonup \mu^\pm$, this inequality contradicts the stability.

\end{remark}

%%%%%%%%%%%%%%%%%%%%%%%%%%%%%%%%%%%%%%%%%%%%%%%%%%%%%%%%%%%%%%%%%
%
%	BIBLIOGRAPHY
%
%%%%%%%%%%%%%%%%%%%%%%%%%%%%%%%%%%%%%%%%%%%%%%%%%%%%%%%%%%%%%%%%

 \subsection*{Acknowledgments}
	M. C. was partially supported by the Swiss National Science Foundation grant 200021\_182565. A. M. acknowledges partial support from GNAMPA-INdAM.

%{\color{red} DA FARE, SELEZIONARE BIBLIOGRAFIA. VORREI AGGIUNGERE CIT A Hardt-Riviere di Acta e a Goldman-Otto-Serfaty nella parte molto generale.
%}
\nocite{}

%%%%%%%%%%%%%%%%%%%%%%%%%%%%%%%%%%%%%%%%%%%%%%%%%%%%%%%%%%%%%%%%%
%
%	AFFILIATIONS
%
%%%%%%%%%%%%%%%%%%%%%%%%%%%%%%%%%%%%%%%%%%%%%%%%%%%%%%%%%%%%%%%%%

\vskip .3 cm

{\parindent = 0 pt\begin{footnotesize}

Maria Colombo
\\
M.C. EPFL SB, Station 8, 
CH-1015 Lausanne, Switzerland
\\
e-mail M.C.: {\tt maria.colombo@epfl.ch}
\\
~
\\
Antonio De Rosa
\\
Courant Institute of Mathematical Sciences, New York University, New York, NY, USA
\\
e-mail A.D.R.: {\tt derosa@cims.nyu.edu}
\\
~
\\
Andrea Marchese
\\
Dipartimento di Matematica, Universit\`a degli Studi di Pavia, Pavia, Italy\\
e-mail A.M.: {\tt andrea.marchese@unipv.it}

\end{footnotesize}
}

\end{document}